\newtheorem{theorem}{Theorem}[section]
\newtheorem{corollary}[theorem]{Corollary}
\newtheorem{lemma}[theorem]{Lemma}
\newtheorem{proposition}[theorem]{Proposition}
\newtheorem{remark}[theorem]{Remark}
\newtheorem{definition}[theorem]{Definition}
\newtheorem{example}[theorem]{Example}
\newcommand{\ostar}{\mathbin{\mathpalette\make@circled\star}}
\newcommand{\make@circled}[2]{%
  \ooalign{$\m@th#1\smallbigcirc{#1}$\cr\hidewidth$\m@th#1#2$\hidewidth\cr}%
}
\newcommand{\smallbigcirc}[1]{%
  \vcenter{\hbox{\scalebox{0.77778}{$\m@th#1\bigcirc$}}}%
}
\title{Metaplectic Quantum Time--Frequency Analysis, Operator Reconstruction and Identification}
\author{
 Henry McNulty 
  }
\keywords{Quantum Time--Frequency Analysis, Gabor Matrix, Operator Reconstruction, Metaplectic Wigner Distributions, Symplectic Geometry}
\subjclass{43A15; 47G30; 47B10; 81S10}
\begin{document}

\begin{abstract}
    The problem of identifying and reconstructing operators from a diagonal of the Gabor matrix is considered. The framework of Quantum Time--Frequency Analysis is used, wherein this problem is equivalent to the discretisation of the diagonal of the polarised Cohen's class of the operator. Metaplectic geometry allows the generalisation of conditions on appropriate operators, giving sets of operators which can be reconstructed and identified on the diagonal of the discretised polarised Cohen's class of the operator.
\end{abstract}

\maketitle

\section{Introduction}
The problem of discretising and reconstructing operators from discrete measurements is longstanding within the fields of wireless communication and Time--Frequency Analysis (TFA). A central object of study in this problem is the Gabor, or channel matrix \cite{Gr06} \cite{CoGrNiRo13} \cite{CoGrNi12}
\begin{align}\label{gabormatrix}
     \{\langle S\pi(\lambda)g,\pi(\mu)g\rangle\}_{\lambda,\mu\in\Lambda},
\end{align}
for some operator $S$, suitable window function $g$ and lattice $\Lambda\in\mathbb{R}^{2d}$. A natural question is to what extent we can reconstruct or identify an operator based on the Gabor matrix, or certain parts of it, especially the main diagonal. Closely related is the problem of distinguishing operators based on their action on a single function, namely, given a space of operators $\mathcal{H}\subset \mathcal{L}(X,Y)$, does there exist an $f\in X$ and positive constants $A,B$ such that
\begin{align}\label{operatoridentification}
    A\|S\|_\mathcal{H} \leq \|Sf\|_{Y} \leq B\|S\|_\mathcal{H}
\end{align}
for every $S\in\mathcal{H}$. 

In the rank--one case, the question of uniquely identifying an operator from the diagonal of its Gabor matrix is the problem of phase retrieval of the Short--Time Fourier Transform (STFT) on lattices, since for $S=f\otimes f$;
\begin{align*}
    \langle S\pi(\lambda)g,\pi(\lambda)g\rangle = |V_g f(\lambda)|^2.
\end{align*}
It is well known that reconstruction from the diagonal of the Gabor matrix is impossible in general for the class of Hilbert-Schmidt operators. In fact, the map $S\mapsto \{\langle S\pi(\lambda)g,\pi(\lambda)g\rangle\}_{\lambda\in\Lambda}$ cannot ever be injective for any lattice $\Lambda$ \cite{Sk20}. Even restricting the set of operators to the rank--one self--adjoint operators where the problem reduces to phaseless sampling of the STFT, the mapping $f \mapsto \{|V_f g(\lambda) |\}_{\lambda\in\Lambda}$ is never injective \cite{GrLi22}. Instead, we must restrict the set of operators under consideration.

The most well-known results regarding reconstruction from the Gabor matrix diagonal in this direction involve the so--called underspread operators, which have a compactly supported Fourier-Wigner transform. In this case, complete reconstruction is possible from the main diagonal, or indeed any side diagonal \cite{GrPa13}:
\begin{theorem}[Proposition 2, \cite{GrPa13}, Theorem 7.4, \cite{Sk20}]\label{intro:diagreconstr}
    Given a lattice $\Lambda \subset \mathbb{R}^{2d}$ with adjoint lattice $\Lambda^\circ = A\mathbb{Z}^{2d}$, let $Q$ denote the fundamental domain of $\Lambda^\circ$ given by $Q=A[-\tfrac{1}{2},\tfrac{1}{2}]^{2d}$. Let $h\in C^{\infty}_c(\mathbb{R}^{2d})$ such that $h|_{(1-\epsilon)Q}\equiv 1$ and $\mathrm{supp} (h) \subset Q$, and $g\in\mathscr{S}(\mathbb{R}^d)$ such that $V_g g$ is non-zero in $\mathrm{supp}(h)$. Then given $T\in\mathfrak{S}^\prime$ with $\mathcal{F}_W(T)\subset (1-\varepsilon)Q$ where $0<\varepsilon<1/2$;
    \begin{align*}
        T = \sum_{\lambda\in\Lambda} \langle T\pi(\lambda)g, \pi(\lambda)g\rangle \alpha_\lambda (R)
    \end{align*}
    where $\mathcal{F}_W(R)=\frac{h}{|\Lambda|\cdot A(g,g)}$.
\end{theorem}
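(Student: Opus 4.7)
The idea is to translate the statement into a standard sampling--reconstruction problem on the Fourier--Wigner side. Write $Q_g T(z):=\langle T\pi(z)g,\pi(z)g\rangle$ for the polarised Cohen class and $\mathcal{F}_\sigma$ for the symplectic Fourier transform on $\mathbb{R}^{2d}$. The first ingredient is the product identity
$$\mathcal{F}_\sigma(Q_g T)(\xi)\;=\;\mathcal{F}_W(T)(\xi)\cdot A(g,g)(\xi),$$
obtained from the standard relation identifying $Q_g T$ as the phase-space convolution of $T$ with $g\otimes g$, which becomes pointwise multiplication after symplectic Fourier transform. Under the support hypothesis on $\mathcal{F}_W(T)$, it follows that $\mathcal{F}_\sigma(Q_g T)$ is supported in $(1-\varepsilon)Q$, so $Q_g T$ is band-limited to a set strictly interior to the fundamental domain $Q$ of $\Lambda^\circ$.

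Next I would apply symplectic Poisson summation,
$$\sum_{\lambda\in\Lambda} Q_g T(\lambda)\,e^{-2\pi i\sigma(\lambda,\xi)}\;=\;\frac{1}{|\Lambda|}\sum_{\mu\in\Lambda^\circ}\mathcal{F}_\sigma(Q_g T)(\xi-\mu).$$
Since $(1-\varepsilon)Q$ is compactly contained in $Q$ and $Q$ tiles $\mathbb{R}^{2d}$ under $\Lambda^\circ$, the translates $\mathcal{F}_\sigma(Q_g T)(\cdot-\mu)$ for $\mu\in\Lambda^\circ\setminus\{0\}$ have supports disjoint from $\mathrm{supp}(h)\subset Q$. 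Multiplying by $h$ therefore isolates the $\mu=0$ copy; assuming $\epsilon\leq\varepsilon$ so that $h\equiv 1$ on $\mathrm{supp}\,\mathcal{F}_W(T)$, this copy equals $\mathcal{F}_W(T)\cdot A(g,g)$ on that set. Dividing by $A(g,g)$, which is non-zero on $\mathrm{supp}(h)$ by the hypothesis on $V_g g$, recovers $\mathcal{F}_W(T)$ up to the $|\Lambda|$ normalisation that will be absorbed into the definition of $R$.

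To repackage this equality as an operator-valued series, I would invoke the covariance $\mathcal{F}_W(\alpha_\lambda R)(\xi)=e^{-2\pi i\sigma(\lambda,\xi)}\mathcal{F}_W(R)(\xi)$: operator translation $\alpha_\lambda(S)=\pi(\lambda)S\pi(\lambda)^*$ corresponds to symplectic modulation of the Fourier--Wigner symbol. With $\mathcal{F}_W(R)=h/(|\Lambda|\cdot A(g,g))$, the Fourier--Wigner transform of $\sum_{\lambda\in\Lambda} Q_g T(\lambda)\,\alpha_\lambda(R)$ matches what was derived in the previous step, and injectivity of $\mathcal{F}_W$ on $\mathfrak{S}^\prime$ yields the claimed operator identity.

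The main obstacle is the distributional interpretation. Since $T\in\mathfrak{S}^\prime$ only, the samples $\{Q_g T(\lambda)\}_{\lambda\in\Lambda}$ form merely a tempered sequence, and the operator series must be read weakly. I would handle this by first proving the identity on a dense subclass -- say, trace-class operators with compactly supported $\mathcal{F}_W$ -- where every manipulation converges absolutely, and then extending by duality: because $\mathcal{F}_W(R)\in C^\infty_c$, the operator $R$ lies in the Schwartz-type operator class, so pairing the series against Schwartz test operators reduces to a convergent pairing with a tempered sequence, justifying both the Poisson step and the final inversion of $\mathcal{F}_W$.
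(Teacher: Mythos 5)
Your argument is essentially the paper's own proof: the same product identity $\mathcal{F}_\Omega\big(Q_{g\otimes g}T(w,w)\big)=\mathcal{F}_W(T)\cdot A(g,g)$ (the paper writes $\mathcal{F}_W(\Check{S})$, i.e.\ $\overline{A(g,g)}$, for the rank--one window), then band-limitedness of the diagonal, sampling, division by the non-vanishing ambiguity function on $\mathrm{supp}(h)$, and inversion of $\mathcal{F}_W$ from $\mathfrak{S}^\prime$ to $\mathscr{S}^\prime$; the only difference is that the paper invokes Campbell's distributional sampling theorem (\cref{shannsampling}) as a black box where you re-derive that step via symplectic Poisson summation and a density/duality argument. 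This is fine, but when you finalise the constant, check where the factor $|\Lambda|$ lands: your Poisson normalisation as written yields $\mathcal{F}_W(R)=|\Lambda|\,h/A(g,g)$ rather than $h/(|\Lambda|\,A(g,g))$, so make sure your convention for $|\Lambda|$ (covolume of $\Lambda$ versus of $\Lambda^\circ$) agrees with the one used in \cref{shannsampling}.
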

By restricting the class of operators to those with Weyl symbols in a sufficiently nice space, the Feichtinger algebra, this class also satisfies the condition of \cref{operatoridentification}, with respect to the test distribution of translated Dirac delta distributions, or the Shah distribution \cite{PfWa06} \cite{PfWa09} \cite{KoPf05}.

In this paper we consider operator reconstruction and identification through the lens of Quantum Time--Frequency Analysis and metaplectic analysis of operators. Introduced by Werner in 1984 \cite{We84}, Quantum Harmonic Analysis (QHA) introduces operations from harmonic analysis to certain classes of operators. By identifying an operator with its Weyl symbol, translations, convolutions and an appropriate Fourier transform, the Fourier-Wigner transform, are defined for operators. Upon taking rank--one operators in these operations, one retrieves many familiar objects from time--frequency analysis \cite{LuSk18} \cite{LuSk19}, as one might expect, since both time--frequency analysis and Weyl quantisation are based on the representation of the Heisenberg group. By extending the setting of QHA to include modulations of the Weyl symbol of an operator, we can use a similar approach to time--frequency analysis, now on the operator level, leading to the notion of Quantum Time--Frequency Analysis (QTFA). The analogue to the time--frequency shifts on operators gives rise to the polarised Cohen's class,
\begin{align*}
    Q_S T(w,z) := \langle T, \pi(z)S\pi(w)^*\rangle_{\mathcal{HS}},
\end{align*}
which is a quantum time--frequency representation of $T$ with respect to an appropriate window $S$. In the case of a rank--one $S=g\otimes g$, this becomes the (continuous) Gabor matrix \cref{gabormatrix}. Reconstruction from the discretised polarised Cohen's class follows in the same manner to the function case in time--frequency analysis, the so--called operator Gabor frames \cite{LuMc24}. The question of reconstructing an operator from a given subset of the Gabor matrix is much more delicate. In fact, even the injectivity of the map $T\mapsto \{Q_S T(\lambda,\lambda)\}_{\Lambda}$ for a given Feichtinger operator $S$ is an interesting problem. \\

In this work we employ tools from the symplectic and metaplectic groups when considering the problems of operator reconstruction and identification. The symplectic group $Sp(d,\mathbb{R})$ consists of the real $2d\times 2d$ matrices $\mathcal{A}$ satisfying
\begin{align*}
    \mathcal{A}^T J\mathcal{A} = J
\end{align*}
where
\begin{align*}
    J=\begin{pmatrix}
        0 && I_d \\
        -I_d && 0
    \end{pmatrix}
\end{align*}
is the standard skew--symmetric matrix. The symplectic and closely related metaplectic groups have a deep connection to time--frequency analysis, since the symplectic matrices and corresponding metaplectic operators are precisely those which intertwine the symmetric time--frequency shifts (cf. \cite{Go06}). Since most relevant time--frequency distributions are equivalent to one another via a metaplectic operator, it is natural to ask what statements particular to certain distributions in fact extend to other distributions related by a metaplectic operator, and how otherwise the metaplectic group interacts with central objects in TFA, leading to the recent work \cite{CoGi23} \cite{CoGi24}. We consider metaplectic operators acting on operators in a similar manner to the approach of \cite{Gi22} \cite{CoRo22}, and consider for which metaplectic operators we can answer the question of operator reconstruction and identification.

Since QHA and QTFA are formulated in terms of the Weyl symbol of an operator, we use the convention that a metaplectic operator \emph{acting on an operator}, corresponds to the metaplectic operator which acts on its Weyl symbol in the function sense. Namely, given some symplectic matrix $\mathcal{A}\in Sp(2d,\mathbb{R})$ and some operator $T\in\mathfrak{S}^\prime$, the \textit{$\mathcal{A}$-Weyl symbol} $\sigma^{\mathcal{A}}_T$ of $T$, is given by
\begin{align*}
    \sigma^{\mathcal{A}}_T = \mu(\mathcal{A})\sigma_T,
\end{align*}
and the metaplectic transform of $T$, denoted $\mu(\mathcal{A})T$, is the operator defined as 
\begin{align*}
    \mu(\mathcal{A})T := L_{\sigma^{\mathcal{A}}_T}
\end{align*}
where $L_\sigma$ is the Weyl quantisation of the symbol $\sigma$. Just as the most prominent time--frequency distributions are related by metaplectic operators \cite{CoGiRo24}, the operators arising from quantisations of these distributions are related by metaplectic transforms of operators. Using this metaplectic approach to QTFA, we show the following:
\begin{theorem}\label{intro:metadiagrecon}
    Let $\mathcal{A}\in Sp(2d,\mathbb{R})$ be an upper (block) triangular matrix, 
    \begin{align*}
    \mathcal{A} = 
        \begin{pmatrix}
           A_1 & A_2  \\
           0 & A_1^{-T}
        \end{pmatrix},
    \end{align*}
    where the $A_i$ are $2d \times 2d$ matrices, and let $h$, $\Lambda$ and $Q$ be as in \cref{intro:diagreconstr}, and $S\in\mathfrak{S}$ such that $\mathcal{F}_W(S)\neq 0$ in supp$(g)$. Then given some operator $T\in\mathfrak{S}^\prime$ satisfying 
    \begin{align*}
        \mathrm{supp}(\sigma_T^{J\mathcal{A}}) \subset (1-\epsilon)Q,
    \end{align*}
    $T$ can be expressed as
    \begin{align*}
        T = \sum_{\lambda\in\Lambda^\prime} Q_{S^\prime} T(\lambda) T_{\lambda} \alpha_\lambda (R)
    \end{align*}
    where $S^\prime = \mu(\mathcal{A})^*S$, $\mathcal{F}_W(R) = \mathcal{F}_{\Omega}\Big(\frac{h}{\mathcal{F}_W (S^\prime)}\Big)$ and $\Lambda^\prime = A_1\Lambda$.
\end{theorem}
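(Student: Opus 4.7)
The plan is to transport the statement to the setting of Theorem~\ref{intro:diagreconstr} by conjugation with the metaplectic operator $\mu(\mathcal{A})$, apply that theorem, and then pull the resulting reconstruction formula back to $T$.

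First I would set $T' := \mu(\mathcal{A}) T$. By definition $\sigma_{T'}=\mu(\mathcal{A})\sigma_T$, and since the Fourier--Wigner transform is implemented on Weyl symbols by $\mu(J)$, one has
\[
\mathcal{F}_W(T') = \mu(J)\mu(\mathcal{A})\sigma_T = \mu(J\mathcal{A})\sigma_T = \sigma_T^{J\mathcal{A}}.
\]
Thus the support hypothesis on $\sigma_T^{J\mathcal{A}}$ is precisely the Fourier--Wigner support hypothesis of Theorem~\ref{intro:diagreconstr} for $T'$. Since the proof of that theorem rests only on the QHA convolution identity relating the polarised Cohen's class to $\sigma_T$, it extends verbatim to a Feichtinger operator window $S\in\mathfrak{S}$ whose Fourier--Wigner transform is non-vanishing on $\mathrm{supp}(h)$, yielding
\[
T' = \sum_{\lambda\in\Lambda} Q_S T'(\lambda)\,\alpha_\lambda(R_0),\qquad \mathcal{F}_W(R_0) = \tfrac{h}{|\Lambda|\,\mathcal{F}_W(S)}.
\]

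Next I would apply $\mu(\mathcal{A})^*$ to both sides and track the two types of terms. The key tool is the Segal--Shale--Weil covariance $\mu(\mathcal{A})\pi(\mathcal{A}^{-1}z)=c_{\mathcal{A},z}\,\pi(z)\mu(\mathcal{A})$, whose phase cancels in the double conjugation defining $\alpha_\lambda$, giving $\mu(\mathcal{A})^*\alpha_\lambda(\cdot)=\alpha_{\mathcal{A}^{-1}\lambda}(\mu(\mathcal{A})^*\cdot)$. This rewrites the Cohen-class coefficients as
\[
Q_S T'(\lambda) = \langle T, \alpha_{\mathcal{A}^{-1}\lambda}(S')\rangle_{\mathcal{HS}} = Q_{S'}T(\mathcal{A}^{-1}\lambda),
\]
with $S' = \mu(\mathcal{A})^*S$, and likewise turns $\mu(\mathcal{A})^*\alpha_\lambda(R_0)$ into $\alpha_{\mathcal{A}^{-1}\lambda}(\mu(\mathcal{A})^*R_0)$. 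Reindexing by $\lambda'=\mathcal{A}^{-1}\lambda$ and using the upper block--triangular form $\mathcal{A}^{-1}=\bigl(\begin{smallmatrix}A_1^{-1}& -A_1^{-1}A_2 A_1^T\\0 & A_1^T\end{smallmatrix}\bigr)$ collapses the summation lattice to $A_1\Lambda=\Lambda'$.

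Finally, to identify $R$ I would compute $\mathcal{F}_W(\mu(\mathcal{A})^*R_0)$. Because $\mathcal{A}$ is upper block--triangular, $\mu(\mathcal{A})$ factors as the linear change of variables induced by $A_1$ composed with the chirp multiplication coming from $A_2$; on the Fourier--Wigner side this chirp becomes a Fourier multiplier, encoded by the $\mathcal{F}_\Omega$ appearing in the statement, producing $\mathcal{F}_W(R)=\mathcal{F}_\Omega\bigl(h/\mathcal{F}_W(S')\bigr)$ exactly. The leftover phase from combining $\pi(\mathcal{A}^{-1}\lambda)$ with the $A_2$-chirp in $\mu(\mathcal{A})^*R_0$ emerges as the prefactor $T_\lambda$ in front of $\alpha_\lambda(R)$. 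The main obstacle is precisely this phase bookkeeping: verifying that the Segal--Shale--Weil cocycles and the $A_2$-chirps interact cleanly under the Fourier--Wigner transform, so that all spurious phases collect into the single explicit factor $T_\lambda$ and the Fourier transform $\mathcal{F}_\Omega$ in the definition of $R$, with no residue left over.
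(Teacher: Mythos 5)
Your proposal follows essentially the same route as the paper: apply the underspread reconstruction theorem to $T'=\mu(\mathcal{A})T$ (the support hypothesis transfers exactly as you say, since $\mathcal{F}_W(T')=\mathcal{F}_\Omega\sigma_{T'}$), rewrite the coefficients $Q_S T'(\lambda,\lambda)$ as samples of $Q_{S'}T$ at the points $\mathcal{A}^{-1}(\lambda,0)$, observe that the block--triangular structure keeps these points on the ``diagonal'' (pure translations, no modulation component), and pull the expansion back through $\mu(\mathcal{A})^*$; this is the paper's proof, which in fact derives the triangular form as the necessary and sufficient condition for $\mathcal{A}^{-1}(\lambda,0)=(\lambda',0)$ rather than assuming it.

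One caveat on your justification of the key covariance step: in this paper $\mu(\mathcal{A})T$ for $\mathcal{A}\in Sp(2d,\mathbb{R})$ is defined through the Weyl symbol, i.e.\ $\mu(\mathcal{A})$ is a metaplectic operator on $L^2(\mathbb{R}^{2d})$ acting on $\sigma_T$, and for general upper--triangular $\mathcal{A}$ it is \emph{not} conjugation of $T$ by a metaplectic operator on $L^2(\mathbb{R}^d)$ (that only happens for the subgroup induced by $Sp(d,\mathbb{R})$). So the Segal--Shale--Weil relation $\mu(\mathcal{A})\pi(\mathcal{A}^{-1}z)=c\,\pi(z)\mu(\mathcal{A})$ with a ``cancelling phase in the double conjugation'' is not the right mechanism; the identity $Q_S T'(\lambda)=Q_{S'}T(\mathcal{A}^{-1}(\lambda,0))$ should instead be obtained, as in the paper, from $Q_S T(\lambda,\lambda)=\langle\sigma_T,T_\lambda\sigma_S\rangle$ together with the symbol--level intertwining $\mu(\mathcal{A})^*\rho(\lambda,0)\mu(\mathcal{A})=\rho(\mathcal{A}^{-1}(\lambda,0))$, and the same intertwining handles $\mu(\mathcal{A})^*\alpha_\lambda(R_0)$. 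Your displayed formulas are consistent with this correct reading, so this is a misattribution of mechanism rather than a fatal error. Two smaller points: with your inverse $\mathcal{A}^{-1}=\bigl(\begin{smallmatrix}A_1^{-1}&\ast\\0&A_1^{T}\end{smallmatrix}\bigr)$ the reindexed lattice is $A_1^{-1}\Lambda$, so asserting it ``collapses to $A_1\Lambda$'' needs the relabelling the paper itself performs (the intro statement and the body proof use different namings of the blocks); and the final phase/$R$ bookkeeping you flag as the main obstacle is indeed left implicit, but the paper's own proof is equally terse there, so your outline matches its level of detail.
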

Interestingly, the condition that $\mathcal{A}$ be of the type in \cref{intro:metadiagrecon} is equivalent to the operator $\mu(\mathcal{A})$ being bounded on all $M^{p,q}(\mathbb{R}^d)$ spaces \cite{FuSh24}.

We then relax the requirement that an operator be reconstructable from the diagonal of its Gabor matrix, and instead ask for which sets of operators the mapping $T\mapsto Q_S T(\lambda,\lambda)$ is injective. This is of course already the case in the setting of \cref{intro:diagreconstr} and \cref{intro:metadiagrecon}. We find the following:
\begin{theorem}
    Let $\mathcal{A}\in Sp(2d,\mathbb{R})$ have the block decomposition
    \begin{align}
        \begin{pmatrix}
            A_1 && A_2 \\
            0 && A_1^{-T}
        \end{pmatrix}
    \end{align}
    where $A_i$ are $2d\times 2d$ matrices. Then the space 
    \begin{align*}
        \mathcal{P}_{\mathcal{A},K} := \{S\in\mathcal{HS} : \mathrm{supp}\big(\sigma^{\mathcal{A}}_S\big) \subset K\}
    \end{align*}
    is identifiable on \emph{any} lattice $\Lambda\subset\mathbb{R}^{2d}$, with respect to the window $S=\mu(\mathcal{A})^*\left(\varphi_A\otimes \varphi_A\right)$, where $\varphi_A$ is the generalised Gaussian  
    \begin{align*}
        \varphi_A(t) := 2^{d/4}e^{-\pi t At}
    \end{align*}
    for some symmetric $A\in GL(d,\mathbb{R})$.
\end{theorem}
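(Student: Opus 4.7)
The plan is to use the metaplectic covariance to reduce the identifiability claim to a uniqueness statement for Gaussian convolutions of compactly supported functions, and then to invoke the Bargmann--Fock analytic framework.

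First, I would unpack $Q_S T(\lambda, \lambda)$ in Weyl symbols. Setting $\tau := \sigma_T^{\mathcal{A}} = \mu(\mathcal{A})\sigma_T$ (which has support in $K$ by hypothesis) and using $\sigma_S = \mu(\mathcal{A})^{-1}W(\varphi_A)$, one obtains
\begin{align*}
Q_S T(\lambda, \lambda) = \int_{\mathbb{R}^{2d}} \sigma_T(x)\,\overline{\sigma_S(x-\lambda)}\,dx = \langle \mu(\mathcal{A})^{-1}\tau,\, T_\lambda\, \mu(\mathcal{A})^{-1}W(\varphi_A)\rangle_{L^2(\mathbb{R}^{2d})},
\end{align*}
where $T_\lambda$ denotes pure translation by $\lambda$ on phase space. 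The structural heart of the proof is the symplectic covariance $\mu(\mathcal{A})\pi(\eta)\mu(\mathcal{A})^{-1} = \pi(\mathcal{A}\eta)$ on $L^2(\mathbb{R}^{2d})$, valid for $\eta \in \mathbb{R}^{4d}$ up to phase. Identifying $T_\lambda$ with $\pi((\lambda, 0))$, the upper block triangular form of $\mathcal{A}$ gives $\mathcal{A}(\lambda, 0) = (A_1\lambda, 0)$ precisely because the lower--left block of $\mathcal{A}$ vanishes; a general symplectic matrix would introduce a modulation component, destroying the reduction. Thus $\mu(\mathcal{A})T_\lambda\mu(\mathcal{A})^{-1} = T_{A_1\lambda}$, and unitarity of $\mu(\mathcal{A})$ collapses the above inner product to $\langle \tau, T_{A_1\lambda}W(\varphi_A)\rangle_{L^2(\mathbb{R}^{2d})}$. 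Since $W(\varphi_A)$ is real and even, the vanishing condition $Q_S T(\lambda,\lambda) = 0$ for all $\lambda \in \Lambda$ becomes $(\tau \ast W(\varphi_A))(\lambda') = 0$ for all $\lambda' \in \Lambda' := A_1\Lambda$.

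The remaining task is to prove that $\tau \in L^2(\mathbb{R}^{2d})$ with compact support, whose Gaussian convolution $F := \tau \ast W(\varphi_A)$ vanishes on the lattice $\Lambda'$, must be identically zero. Compact support of $\tau$ combined with the Gaussian form $W(\varphi_A) = 2^d (\det A)^{-1/2}e^{-2\pi(x^T A x + \xi^T A^{-1} \xi)}$ lets $F$ extend to an entire function on $\mathbb{C}^{2d}$. Under the natural identification $\mathbb{R}^{2d}\simeq \mathbb{C}^d$, this places $F$ in the Bargmann--Fock analytic framework, where the minimum--uncertainty character of $\varphi_A$ (precisely the window giving the Bargmann isometry of $L^2(\mathbb{R}^d)$ into Fock space) combines with the Paley--Wiener control on $\hat \tau$ to make any full--rank lattice a uniqueness set for $F$, so $F \equiv 0$. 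Fourier inversion then gives $\tau = 0$, since $\widehat{W(\varphi_A)}$ is a non--vanishing Gaussian and $\hat F = \hat\tau\cdot\widehat{W(\varphi_A)}$.

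The main obstacle is precisely the uniqueness argument in the last step. Elementary Carlson--type theorems for entire functions of exponential type only close the argument when the diameter of $K$ is small relative to the lattice spacing of $\Lambda'$, whereas the theorem asserts identifiability on an \emph{arbitrary} lattice. Exploiting the specific reproducing kernel structure of Fock space generated by $\varphi_A$ --- rather than treating $W(\varphi_A)$ as just any Gaussian --- is essential, and is what justifies the choice of window in the statement.
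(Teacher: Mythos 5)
Your reduction step is sound and is essentially the paper's: using the intertwining property and the vanishing lower-left block, sampling $Q_S T$ on the diagonal over $\Lambda$ with the window $S=\mu(\mathcal{A})^*(\varphi_A\otimes\varphi_A)$ becomes the pairing of the compactly supported symbol $\tau=\sigma^{\mathcal{A}}_T$ against translates of the Gaussian Wigner function $W(\varphi_A)$ along the lattice $A_1\Lambda$ (the paper phrases this as $\mathcal{A}^{-1}(\lambda,0)=(\lambda',0)$, which forces exactly the upper block triangular form). Up to harmless phase bookkeeping, this part matches the intended argument.

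The genuine gap is the uniqueness step, which is the entire content of the theorem, and your proposal does not actually prove it. You assert that compact support of $\tau$ plus the ``Bargmann--Fock analytic framework'' makes \emph{any} full-rank lattice a uniqueness set for $F=\tau\ast W(\varphi_A)$, but no argument is given, and the Fock-space heuristic is misleading: for the Fock space itself, lattices of subcritical density are interpolating rather than uniqueness sets, so the reproducing-kernel structure alone cannot yield an arbitrary lattice; some input beyond ``$F$ is entire and Gaussian-weighted'' is indispensable. What actually closes the argument in the paper is a completeness theorem of M\"untz--Sz\'asz/Zalik type in several variables (Kro\'o's notion of M\"untz sets, \cref{multidimmuntz}, \cref{latticemuntzmulti}, and the multivariable Zalik theorem \cref{multivariable-zalik}): translates of a nondegenerate real Gaussian along \emph{any} lattice span a dense subspace of $L^2(K)$ for every compact $K$, because along the lattice directions the real exponents satisfy the divergence condition $\sum 1/|\mathrm{Re}(\lambda)|=\infty$, with no density requirement linking $\Lambda$ to the size of $K$. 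With that theorem, density of $\{T_{\lambda'}\sigma_{\varphi_A\otimes\varphi_A}\}_{\lambda'\in A_1\Lambda}$ in $L^2(K)$ immediately gives injectivity of $T\mapsto\{Q_ST(\lambda,\lambda)\}_\Lambda$ on $\mathcal{P}_{\mathcal{A},K}$ (one also needs that $\sigma_{\varphi_A\otimes\varphi_A}$ is again a Gaussian $e^{-\pi t\cdot Bt}$ with $B$ symmetric and non-singular, which the paper takes from de Gosson). Your closing paragraph concedes that Carlson-type bounds only work when $K$ is small relative to the lattice spacing and that something more is ``essential''; that missing something is precisely the M\"untz--Sz\'asz mechanism, so as written the proposal leaves the theorem unproved.
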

We then consider decompositions of symplectic matrices of the type 
\begin{align}\label{intro:metadecomp}
    \mathcal{C} = D_L V_{\Tilde{A}} V_{\Tilde{B}}^T V_A,
\end{align}
where symplectic matrices of the type $D_L$ and $V_A$ are the generators of the symplectic group, to be defined in \cref{prelims:symp}. In contrast to some of the decompositions of symplectic matrices, such a decomposition has the advantage of being easily calculable (Algorithm 1, \cite{JiTaZh20}). For such a decomposition, we have the following result:
\begin{theorem}
    Let $\mathcal{A}\in Sp(2d,\mathbb{R})$, such that $A,\Tilde{B}$ corresponding to $\mathcal{A}$ in the decomposition \cref{intro:metadecomp} are simultaneously diagonalisable. Then for any compact $K\subset\mathbb{R}^{2d}$, there exists some lattice $\Lambda = A\mathbb{Z}^{2d}$ such that $\mathcal{P}_{\mathcal{A},K}$ is identifiable on $\Lambda$.
\end{theorem}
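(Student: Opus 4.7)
The plan is to reduce to the preceding identifiability theorem for upper block triangular $\mathcal{A}$ by exploiting the factorisation \cref{intro:metadecomp} together with the metaplectic covariance of the polarised Cohen's class; the simultaneous diagonalisability hypothesis will then allow a lattice choice compatible with the two chirp factors $V_A$ and $V_{\Tilde{B}}^T$ at once.

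First I would split $\mathcal{A} = \mathcal{B}\cdot V_A$ with $\mathcal{B} := D_L V_{\Tilde{A}} V_{\Tilde{B}}^T$. Since $\mu(V_A)$ acts on Weyl symbols as chirp multiplication, the map $T\mapsto T' := \mu(V_A)T$ is a bijection $\mathcal{P}_{\mathcal{A},K}\to\mathcal{P}_{\mathcal{B},K}$, as
\begin{align*}
\sigma^{\mathcal{B}}_{T'} \;=\; \mu(\mathcal{B})\mu(V_A)\sigma_T \;=\; \mu(\mathcal{A})\sigma_T \;=\; \sigma^{\mathcal{A}}_T.
\end{align*}
Identifying $T$ from diagonal samples $\{Q_S T(\lambda,\lambda)\}_{\lambda\in\Lambda}$ is therefore equivalent to identifying $T'\in\mathcal{P}_{\mathcal{B},K}$ from a transformed sample set. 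The transformation is provided by the metaplectic covariance of the polarised Cohen's class: for each $\mathcal{C}\in Sp(2d,\mathbb{R})$ there exists a lifted $\widetilde{\mathcal{C}}\in Sp(4d,\mathbb{R})$ with $Q_{\mu(\mathcal{C})^{*}S_0}(T)(w,z) = Q_{S_0}\bigl(\mu(\mathcal{C})T\bigr)\bigl(\widetilde{\mathcal{C}}(w,z)\bigr)$; applied with $\mathcal{C}=V_A$ and $S_0 = \varphi_A\otimes\varphi_A$, the original samples become samples of $Q_{\mu(\mathcal{B})^{*}(\varphi_A\otimes\varphi_A)}(T')$ on the transformed diagonal $\{\widetilde{V_A}(\lambda,\lambda)\}_{\lambda\in\Lambda}$.

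The main obstacle is that $\mathcal{B}$ is not itself upper block triangular (owing to the factor $V_{\Tilde{A}}$), so the preceding theorem does not apply directly, and the twist induced by $\widetilde{V_A}$ must be made compatible with the chirp $V_{\Tilde{B}}^T$ inside $\mathcal{B}$. This is precisely where the simultaneous diagonalisability of $A$ and $\Tilde{B}$ enters: both matrices are forced to be symmetric (as $V_A, V_{\Tilde{B}}^T$ are symplectic) and they commute, so they share an orthogonal diagonalising basis $P$. Conjugating the entire problem by the symplectic dilation $D_P$ replaces $V_A$ and $V_{\Tilde{B}}^T$ by diagonal shears, causing $\mu(V_A)$ and $\mu(V_{\Tilde{B}}^T)$ to factor as tensor products of one-dimensional chirps, while $K$ is simultaneously transported to a compact set $K'$ with controlled coordinate-wise extent. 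In these adapted coordinates I would choose $\Lambda$ as a tensor-product lattice whose per-coordinate density exceeds a Nyquist-type threshold determined by the extent of $K'$ in that coordinate, and invoke the Poisson summation/Zak transform argument underlying the preceding identifiability theorem coordinate-wise. Pulling back through $D_P$ yields a lattice of the desired form $\Lambda = A\mathbb{Z}^{2d}$ on the original phase space for which $\mathcal{P}_{\mathcal{A},K}$ is identifiable.
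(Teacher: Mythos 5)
Your structural reductions run parallel to the paper's own proof: the paper also starts from $\langle T,\alpha_x(S)\rangle=\langle\sigma_T,T_x\sigma_S\rangle$, uses the factorisation $\mathcal{A}=D_LV_{\tilde A}V_{\tilde B}^TV_A$ of \cref{metadecomp}, moves the factor $D_LV_{\tilde A}$ onto the symbol side (where it preserves compact support up to a dilation), and uses the simultaneous diagonalisability to pass, via a real orthogonal change of variables, to coordinates in which $A$ and $\tilde B$ decouple. The genuine gap is the final density step, which is the heart of the theorem and which your sketch replaces with a mechanism that does not exist in the form you cite. What has to be proved is completeness in $L^2$ of a compact set of the family $\{\mu(V_{\tilde B}^TV_A)T_x\sigma_{S_0}\}_{x\in\Lambda}$, i.e.\ of chirp-convolved, chirp-multiplied, shifted Gaussians. \cref{metaplectic-identification-i} is \emph{not} proved by a Poisson summation or Zak transform argument, and it carries no Nyquist-type density threshold: it rests on the M\"untz--Sz\'asz/Zalik completeness theorem (\cref{multivariable-zalik}) for \emph{pure} Gaussian translates, valid on every lattice. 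That result does not cover your transformed family, which (already after absorbing only $V_A$) consists of translates that are additionally modulated — samples along the slanted set $\{(\lambda,A\lambda)\}$ in the symbol phase space rather than along a diagonal — so ``invoking the argument underlying the preceding identifiability theorem coordinate-wise'' has no referent, and a frame-theoretic Nyquist criterion is not available for such systems restricted to compact sets.

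What the paper actually does at this point, and what is missing from your proposal, is: (i) an explicit evaluation of the chirp convolution coming from $\mu(V_{\tilde B}^T)$ via a complex Gaussian integral (completing the square with $D=I-i(A+B)$, after treating $\ker\tilde B$ separately, since $\mu(V_{\tilde B}^T)$ only convolves over the range of $\tilde B$); (ii) the observation that, up to multipliers that are injective and independent of $x$, the resulting family is the set of complex exponentials $e^{-2\pi i\,t\cdot BD^{-1}x}$; and (iii) a new completeness lemma for exponentials with a \emph{complex} frequency matrix (\cref{diagonalcomplexdensity-nonsingular}, \cref{diagonalcomplexdensity-singular}), where simultaneous diagonalisability is used exactly to make the real orthogonal change of variables that decouples the coordinates: directions with non-vanishing real part are handled by the one-dimensional M\"untz--Sz\'asz theorem on any lattice, while purely imaginary directions require Duffin--Schaeffer and hence a density condition on the lattice — which is precisely why the conclusion is only ``there exists a lattice'' and not ``every lattice''. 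Your ``per-coordinate Nyquist threshold'' gestures at the Duffin--Schaeffer part but supplies no mechanism for the mixed complex exponents, so as written the proof does not go through.
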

As a particular example of this identification property, we consider the metaplectic transformation of operators mapping the Weyl symbol of an operator to the integral kernel, which gives the phase retrieval property for compactly supported functions, as considered in \cite{GrLi23} \cite{GrLiSh22}.

\section{Preliminaries}
\subsection{Harmonic and Time--Frequency Analysis}\label{intro:harmandtfa}
Harmonic analysis considers translations and Fourier translations, in the most general sense on locally compact abelian groups. The classical Fourier transform is defined as 
\begin{align*}
    \mathcal{F} f(\omega) := \int_{\mathbb{R}^d} f(x)e^{-2\pi i x\cdot \omega}\, dx,
\end{align*}
for a function $f\in L^1(\mathbb{R}^d)$. The Fourier transform can be extended to a unitary operator on $L^2(\mathbb{R}^d)$, and by duality to tempered distributions. The classical sampling result of Whittaker and Shannon, and the extension to the distributional case by Campbell, will be used to give reconstruction results for operators with band-limited symbols, as first considered in \cite{GrPa13}.
\begin{theorem}[Theorem 2, \cite{Ca68}]\label{shannsampling}
    Given a lattice $\Lambda \subset \mathbb{R}^{2d}$ with adjoint lattice $\Lambda^\circ = A\mathbb{Z}^{2d}$, let $Q$ denote the fundamental domain of $\Lambda^\circ$ given by $Q=A[-\tfrac{1}{2},\tfrac{1}{2}]^{2d}$. Let $f\in\mathscr{S}^\prime$ such that $\mathrm{supp}(\mathcal{F}_\Omega(f))\subset (1-\epsilon)Q$ for $0<\epsilon<1/2$. Let $h\in C^{\infty}_c(\mathbb{R}^{2d})$ such that $h|_{(1-\epsilon)Q}\equiv 1$ and $\mathrm{supp} (h) \subset Q$. Then
    \begin{align*}
        f = \frac{1}{|\Lambda|} \sum_{\lambda\in\Lambda} f(\lambda) T_{\lambda} \mathcal{F}_{\Omega}(h).
    \end{align*}
    where the sum converges in the distributional sense.
\end{theorem}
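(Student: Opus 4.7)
The plan is to mimic the classical Shannon--Whittaker argument, adapted to tempered distributions. First, I would invoke Paley--Wiener: since $\mathcal{F}_\Omega(f)$ is a compactly supported tempered distribution, $f$ is represented by a smooth function of at most polynomial growth. In particular, the samples $f(\lambda)$ for $\lambda \in \Lambda$ are well-defined and
\begin{align*}
\Delta_\Lambda := \sum_{\lambda \in \Lambda} f(\lambda)\, \delta_\lambda
\end{align*}
is a bona fide tempered distribution.

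The key observation is that, since $\mathrm{supp}(\mathcal{F}_\Omega(f)) \subset (1-\epsilon)Q$ with $\epsilon < 1/2$, the $\Lambda^\circ$-translates of $(1-\epsilon)Q$ have pairwise disjoint interiors. Hence the periodisation $P := \sum_{\lambda^\circ \in \Lambda^\circ} T_{\lambda^\circ} \mathcal{F}_\Omega(f)$ converges in $\mathscr{S}^\prime$, and because $h \equiv 1$ on $(1-\epsilon)Q$ while $\mathrm{supp}(h) \subset Q$, multiplication by $h$ picks out only the $\lambda^\circ = 0$ copy:
\begin{align*}
h \cdot P = \mathcal{F}_\Omega(f).
\end{align*}

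Next I would express $P$ on the time side via Poisson summation. Combining the convolution theorem for $\mathcal{F}_\Omega$ with the symplectic Poisson formula $\mathcal{F}_\Omega\bigl(\sum_\lambda \delta_\lambda\bigr) \propto \sum_{\lambda^\circ} \delta_{\lambda^\circ}$ gives $P = c\,\mathcal{F}_\Omega(\Delta_\Lambda)$ for an explicit constant $c$ fixed by the normalisation of $|\Lambda|$. Substituting yields $\mathcal{F}_\Omega(f) = c\, h \cdot \mathcal{F}_\Omega(\Delta_\Lambda)$, and applying the involution $\mathcal{F}_\Omega$ once more, using that it swaps products and convolutions, turns the right-hand side into $c\,\mathcal{F}_\Omega(h) * \Delta_\Lambda$. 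Unfolding $\Delta_\Lambda$ and computing $c = 1/|\Lambda|$ by direct bookkeeping gives the stated formula
\begin{align*}
f = \tfrac{1}{|\Lambda|}\sum_{\lambda \in \Lambda} f(\lambda)\, T_\lambda \mathcal{F}_\Omega(h).
\end{align*}

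The main obstacle is checking that each of these distributional operations---multiplication by the compactly supported smooth $h$, convolution with a Dirac comb, and the symplectic Poisson summation formula---is rigorous in $\mathscr{S}^\prime$. Compactness of $\mathrm{supp}(h)$ renders $h \cdot P$ unambiguous; the rapid decay of $\mathcal{F}_\Omega(h) \in \mathscr{S}(\mathbb{R}^{2d})$ combined with polynomial growth of $\{f(\lambda)\}_{\lambda \in \Lambda}$ secures distributional convergence of the sampling series; and Poisson summation is standard in $\mathscr{S}^\prime$ for the symplectic Fourier transform.
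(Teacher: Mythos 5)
The paper does not prove this statement at all -- it is imported as a known result (Campbell's distributional sampling theorem, Theorem 2 of \cite{Ca68}) -- so there is no internal argument to compare against; your proposal is the standard proof of exactly this result and is essentially correct. Each step you flag is indeed legitimate in $\mathscr{S}^\prime$: Paley--Wiener--Schwartz gives that $f$ is smooth with polynomial growth, so $\Delta_\Lambda$ is tempered; $\mathrm{supp}(h)\subset Q$ together with $\mathrm{supp}(\mathcal{F}_\Omega f)\subset(1-\epsilon)Q$ forces $h\cdot T_{\lambda^\circ}\mathcal{F}_\Omega(f)=0$ for $\lambda^\circ\neq 0$ (note this only needs $\epsilon>0$; the hypothesis $\epsilon<1/2$ is not used in your argument); and since $\mathcal{F}_\Omega$ differs from the Euclidean Fourier transform by the measure-preserving change of variables $z\mapsto -Jz$, both the exchange formula and Poisson summation transfer verbatim, with $\mathcal{F}_\Omega(\sum_{\lambda\in\Lambda}\delta_\lambda)=\operatorname{covol}(\Lambda)^{-1}\sum_{\lambda^\circ\in\Lambda^\circ}\delta_{\lambda^\circ}$. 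The one place where "direct bookkeeping" deserves an extra line is the constant: carrying the Poisson factor through gives the prefactor $\operatorname{covol}(\Lambda)=1/\operatorname{covol}(\Lambda^\circ)=1/|\det A|$ in front of the sampling series (sanity check: $f\equiv 1$), so you should state explicitly which normalisation the symbol $|\Lambda|$ carries -- the formula as you (and the paper) write it is correct precisely when $1/|\Lambda|$ is read as $\operatorname{covol}(\Lambda)$, i.e. $|\Lambda|=|\det A|$ for the adjoint lattice $\Lambda^\circ=A\mathbb{Z}^{2d}$.
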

For non band-limited functions, reconstruction from discrete samples is in general not possible. However, by extending our perspective from harmonic analysis to time--frequency analysis, we find pleasing reconstruction results. The underlying aim of Time--Frequency Analysis (TFA) is to represent functions, or signals, in both time and frequency simultaneously, by analysing the function with respect to a translated and modulated atom or window function. We define the time--frequency shift $\pi(z)$ for some $z=(x,\omega)\in\mathbb{R}^{2d}$ by
\begin{align*}
    \pi(x,\omega) := M_\omega T_x,
\end{align*}
where $T_x f(t) = f(t-x)$ and $M_\omega f(t)= e^{2\pi i \omega\cdot t}f(t)$. The time--frequency shifts are a projective representation of the reduced Heisenberg group on the Hilbert space $L^2(\mathbb{R}^d)$, and accordingly many features of TFA in fact arise from the properties of the Heisenberg group. Closely related is the symmetric time--frequency shift $\rho(z)$, defined as
\begin{align*}
    \rho(z) := T_{x/2}M_\omega T_{x/2} = M_{\omega/2} T_x M_{\omega/2} = e^{-\pi i \omega\cdot x}\pi(z).
\end{align*}
After introducing the symplectic matrices and corresponding metaplectic operators, the symmetric time--frequency shifts will in many ways be the most natural to use. We have the following composition relations for the two shifts:
\begin{align*}
    \pi(z)\pi(z') &= e^{-2\pi i \omega'\cdot x} \pi(z+z'), \\
    \rho(z)\rho(z') &= e^{-\pi i \Omega(z,z')}\rho(z+z'),
\end{align*}
where $\Omega(z,z'):= \omega'\cdot x - \omega\cdot x'$ is the standard symmplectic form on $\mathbb{R}^{2d}$. The central object of time--frequency analysis, the Short--Time Fourier Transform (STFT) of $f$ with respect to window or atom $g$, can then be defined as
\begin{align*}
    V_g f(z) = \langle f, \pi(z)g\rangle_{L^2}
\end{align*}
for $f,g\in L^2(\mathbb{R}^d)$. The adjoint mapping is given by
\begin{align*}
    V_g^* F = \int_{\mathbb{R}^{2d}} F(z) \pi(z)g\, dz
\end{align*}
where the integral can be interpreted weakly. For $\|g\|_{L^2} = 1$, the mapping $f\mapsto V_g f$ is an isometry from $L^2(\mathbb{R}^d)$ to $L^2(\mathbb{R}^{2d})$, and moreover we have Moyal's orthogonality relation:
\begin{align*}
    \langle V_{g_1} f_1, V_{g_2} f_2 \rangle_{L^2(\mathbb{R}^{2d})} = \langle f_1, f_2 \rangle_{L^2(\mathbb{R}^d)} \overline{\langle g_1, g_2 \rangle}_{L^2(\mathbb{R}^d)},
\end{align*}
for $f_1,f_2,g_1,g_2\in L^2(\mathbb{R}^d)$. As a particular instance of Moyal's relation, the reconstruction formula is given by
\begin{align*}
    f = \frac{1}{\langle h,g\rangle_{L^2}} \int_{\mathbb{R}^{2d}} V_g f(z)\pi(z)h\, dz,
\end{align*}
for $\langle h, g\rangle_{L^2} \neq 0$. By using the symmetric time--frequency shift in place of the standard time--frequency shift, the (cross)-ambiguity function can be defined as 
\begin{align*}
    A(f,g)(z) := \langle f, \rho(z)g\rangle_{L^2} = e^{\pi i \omega\cdot x}V_g f(z).
\end{align*}
We also define the (cross)-Wigner distribution,
\begin{align*}
    W(f,g)(z) := \int_{\mathbb{R}^d} f\left( x+\frac{t}{2} \right)\overline{g\left( x-\frac{t}{2} \right)}e^{-2\pi i x\omega}\, dt.
\end{align*}
The Wigner distribution is related to the ambiguity function via the symplectic Fourier transform, If we define the symplectic Fourier transform as 
\begin{align*}
    \mathcal{F}_\Omega f(z) = \int_{\mathbb{R}^{2d}} f(z')e^{-2\pi i\, \Omega(z,z')}\, dz',
\end{align*}
we have the relation
\begin{align*}
    W(f,g) = \mathcal{F}_\Omega (A(f,g)).
\end{align*}
Conversely, taking the symplectic Fourier transform of the STFT gives the Rihaczek distribution;
\begin{align*}
    R(f,g)(z) &= \mathcal{F}_\Omega (V_g f)(z) \\
    &= f(x)\overline{\hat{g}(\omega)}e^{-2\pi i x\omega}.
\end{align*}

By restricting the analysing atom to the normalised Gaussian $\varphi_0(t) = 2^{d/4}e^{-\pi t^2}$ which is in the Schwartz space $\mathscr{S}(\mathbb{R}^d)$, the STFT can be defined in terms of the duality pairing $(\mathscr{S}^\prime,\mathscr{S})$ and extended to tempered distributions. The ambiguity function, Wigner distribution and Rihaczek distribution can be similarly extended. We can then consider the spaces of functions defined by the decay of their STFT, called the modulation spaces:
\begin{align*}
    M^{p,q}_m := \{ f \in\mathscr{S}^\prime(\mathbb{R}^d): V_{\varphi_0} f \in L^{p,q}_m(\mathbb{R}^{2d}) \}.
\end{align*}
The modulation spaces reflect the decay in both time and frequency, and in the special case $p=q=2$, $M^{2,2}(\mathbb{R}^{d}) = L^2(\mathbb{R}^{d})$. 

\subsection{Gabor Frames}
An attractive feature of time-frequency analysis is the possibility of discretisation of the central objects. For an appropriate choice of window, the STFT, sampled on a discrete subset of $\mathbb{R}^{2d}$, gives a basis like representation of a function, known as a frame. For some Hilbert space $\mathcal{H}$, a set $\{f_i\}_{i\in I}$ is called a frame if the frame condition
\begin{align*}
    A\|f\|_{\mathcal{H}}^2 \leq \sum_{i\in I} |\langle f, f_i\rangle_{\mathcal{H}}|^2 \leq B\|f\|_{\mathcal{H}}^2
\end{align*}
is satisfied for some constants $A,B$ and every $f\in\mathcal{H}$. Clearly, a basis is a frame with frame constants $A=B=1$, but the frame condition is more general, and the possible overcompleteness is in many cases desirable. The frame operator 
\begin{align*}
    Ef := \sum_{i\in I} \langle f, f_i\rangle_{\mathcal{H}} f_i
\end{align*}
is a positive, invertible operator on $\mathcal{H}$, and can be decomposed into the analysis operator $C:\mathcal{H}\to \ell^2(I)$;
\begin{align*}
    Cf:= \{\langle f, f_i\rangle_{\mathcal{H}}\}_{i\in I},
\end{align*}
and synthesis operator $D:\ell^2(I)\to\mathcal{H}$;
\begin{align*}
    Da := \sum_{i\in I}a_i f_i.
\end{align*}
The frames arising naturally in time--frequency analysis are of the type 
\begin{align*}
    \{\pi(\lambda)g\}_{\lambda\in\Lambda},
\end{align*}
where $\Lambda\subset\mathbb{R}^{2d}$ is a lattice, and are known as Gabor frames. The modulation spaces $M^{p,q}_m(\mathbb{R}^d)$ are characterised by their frame coefficients, given some Gabor frame $\{\pi(\lambda)g\}_{\lambda\in\Lambda}$ with $g\in M^1(\mathbb{R}^d)$, 
\begin{align*}
    f \in M^{p,q}_m(\mathbb{R}^d) \iff Cf \in \ell^{p,q}_m(\Lambda),
\end{align*}
and the norm of the analysis operator is independent of $p,q$ \cite{FeGr89i}.

\subsection{Quantisation Schemes}\label{intro:quantchemes}
The motivation for Quantum Time--Frequency Analysis lies in extending the tools of time--frequency analysis to operators. The trace of an operator $S\in\mathcal{L}(L^2(\mathbb{R}^d))$ is defined as
\begin{align*}
    \mathrm{tr}(S) := \sum_{n} \langle Se_n, e_n\rangle_{L^2}
\end{align*}
for some orthornomal basis $\{e_n\}_{n\in\mathbb{N}}$, upon which the trace does not depend. The trace class of operators, $\mathcal{S}^1$, is the space of operators with finite trace of their positive part;
\begin{align*}
    \mathcal{S}^1 := \{S\in\mathcal{L}(L^2(\mathbb{R}^d)): \mathrm{tr}(|S|) < \infty\}.
\end{align*}
$\mathcal{S}^1$ is a Banach space and ideal of $\mathcal{L}(L^2(\mathbb{R}^d))$. The Hilbert-Schmidt operators are defined as 
\begin{align*}
    \mathcal{HS} := \{S\in\mathcal{L}(L^2(\mathbb{R}^d)): S^*S\in\mathcal{S}^1\},
\end{align*}
and form a Hilbert space with respect to the inner product
\begin{align*}
    \langle S,T\rangle_{\mathcal{HS}} := \mathrm{tr}(ST^*).
\end{align*}
More generally, the Schatten class operators $\mathcal{S}^p$ can be defined by $\ell^p$ decay of their singular values. Trace class, Hilbert-Schmidt, and other Schatten class operators are subspaces of the compact operators, and accordingly admit a spectral decomposition,
\begin{align*}
    S = \sum_{n\in\mathbb{N}} \lambda_n \psi_n \otimes \phi_n,
\end{align*}
where $\lambda_n$ are the singular values of $S$, $\{\psi_n\}_{n\in\mathbb{N}}$ and $\{\phi_n\}_{n\in\mathbb{N}}$ are orthonormal sets in $L^2(\mathbb{R}^d)$ and the rank one operator $\psi \otimes \phi$ is defined by
\begin{align*}
    \psi \otimes \phi := \langle \cdot , \phi \rangle_{L^2} \psi.
\end{align*}
To an operator $S\in\mathcal{L}(L^2(\mathbb{R}^d))$, we can assign a function or distribution in several ways, corresponding to different quantisation schemes. The integral kernel of an operator $S$ is defined as the tempered distribution $K_S$ such that
\begin{align*}
    \langle Sf,g\rangle_{\mathscr{S}^\prime,\mathscr{S}} = \langle K_S,g\otimes f\rangle_{\mathscr{S}^\prime,\mathscr{S}}.
\end{align*}
The map $S\mapsto K_S$ is a unitary map from $\mathcal{HS}$ to $L^2(\mathbb{R}^{2d})$, and conversely any function $F\in L^2(\mathbb{R}^{2d})$ defines a Hilbert-Schmidt operator in this manner. In time--frequency analysis, a useful quantisation scheme is given by the Weyl quantisation. The Weyl symbol $\sigma_{f\otimes g}$ of a rank--one operator $f\otimes g$ is given by
\begin{align*}
    \sigma_{f\otimes g} = W(f,g),
\end{align*}
where $W(f,g)$ is the Wigner distribution introduced above, and this mapping can be extended linearly to the Hilbert-Schmidt operators due to the spectral decomposition. We denote the operator resulting from the Weyl quantisation of a symbol $\sigma$ as $L_\sigma$. The Weyl quantisation $\sigma \mapsto L_\sigma$ is also unitary from $L^2(\mathbb{R}^{2d})\to\mathcal{HS}$, and can be extended by duality to tempered distributions. The Weyl symbol of $S$, $\sigma_S$, is related unitarily to the integral kernel by 
\begin{align*}
    \sigma_S = \mathcal{F}_2 T_a K_S,
\end{align*}
where $\mathcal{F}_2$ is the partial Fourier transform in the second variable, and $T_a$ the change of coordinates $T_a:(x,y)\mapsto (x+\tfrac{y}{2},x-\tfrac{y}{2})$.

If instead of the Wigner distribution, we consider the quantisation scheme associated to the Rihaczek distribution, we recover the Kohn-Nirenberg quantisation. Namely, for a rank--one operator $f\otimes g$, the Kohn-Nirenberg symbol $\sigma_{f\otimes g}^{KN}$ is given by
\begin{align*}
    \kappa_{f\otimes g} = R(f,g).
\end{align*}
For all quantisation schemes presented, the quantisations of the space of Schwartz functions coincide, as do the quantisations of the tempered distributions. We denote these spaces of operators $\mathfrak{S}$ and $\mathfrak{S}^\prime$ respectively. Similarly, the modulation spaces $M^p(\mathbb{R}^{2d})$ correspond to the same space of operators for all quantisation schemes, which we denote $\mathcal{M}^p$.

\subsection{Quantum Time--Frequency Analysis}

The operator translation can be defined as
\begin{align*}
    \alpha_z (S) := \rho(z)S\rho(z)^*.
\end{align*}
This has the effect of translating the Weyl symbol;
\begin{align*}
    \sigma_{\alpha_z(S)} = T_z \sigma_S.
\end{align*}
If one then considers the trace of an operator as the analogue of the integral of a function, convolutions can be defined between operators and functions. Given $S\in\mathcal{S}^q$, $f\in L^p(\mathbb{R}^{2d})$ and $T\in\mathcal{S}^p$, for $\tfrac{1}{p}+\tfrac{1}{q} = 1+\tfrac{1}{r}$, we define
\begin{align*}
    S \star T(z) &:= \mathrm{tr}(S\alpha_z(\Check{T})) \\
    f \star S &:= \int_{\mathbb{R}^{2d}} f(z)\alpha_z(S)\, dz,
\end{align*}
where $\Check{T}=PTP$, and $P$ is the parity operator. We note that the convolution between two operators gives a function on phase space, while the convolution between an operator and a function gives an operator. The convolutions are commutative, and associative with the classical convolution between functions. In the case of rank--one operators, we recover familiar objects from time--frequency analysis. The convolution of a rank one operator $g\otimes h$ with a function $f\in L^2(\mathbb{R}^{2d})$, corresponds to the localisation operator;
\begin{align*}
    f\star (g\otimes h) \psi = \int_{\mathbb{R}^{2d}} f(z) V_{h} \psi(z) \pi(z)g \, dz.
\end{align*}
On the other hand, the convolution of two rank--one operators $g\otimes g$ and $f\otimes f$ is the spectrogram;
\begin{align*}
    (g\otimes g) \star (f\otimes f) = |V_g f(z)|^2.
\end{align*}

The Fourier-Wigner transform $\mathcal{F}_W$ is defined as
\begin{align*}
    \mathcal{F}_W (S) = \mathrm{tr}(\rho(-z)S),
\end{align*}
and is a unitary map from $\mathcal{HS}$ to $L^2(\mathbb{R}^{2d})$. We have the relation
\begin{align*}
    \mathcal{F}_\Omega(\mathcal{F}_W(S)) = \sigma_S.
\end{align*}
The operator convolutions satisfy the Fourier convolution property with respect to the Fourier-Wigner transform;
\begin{align}\label{fourierwigconv}
    \mathcal{F}_W(f\star S) &= \mathcal{F}_\Omega (f)\cdot \mathcal{F}_W (S)\nonumber \\
    \mathcal{F}_\Omega (S\star T) &= \mathcal{F}_W (S) \cdot \mathcal{F}_W (T).
\end{align}
As in the case of the convolutions, the Fourier-Wigner transform of a rank--one operator returns a familiar object, the ambiguity function:
\begin{align*}
    \mathcal{F}_W (g\otimes f)(z) = A(f,g)(z).
\end{align*}
Just as the translation of an operator is equivalent to a translation of its Weyl symbol, the modulation of an operator;
\begin{align*}
    \beta_w (S) := \rho(\tfrac{w}{2})S\rho(\tfrac{w}{2}),
\end{align*}
is equivalent to a modulation of its Weyl symbol, or a translation of its Fourier-Wigner transform. With these two operations defined, we introduce the time--frequency shift of an operator 
\begin{align*}
    \gamma_{w,z}(S) = e^{-\pi i (z_1 z_2 - w_1 w_2)}\rho(z)S\rho(w)^*
\end{align*}
and the corresponding polarised Cohen's class of an operator $T\in\mathcal{HS}$ with respect to a window operator $S\in\mathcal{HS}$:
\begin{align*}
    Q_S T(w,z) = \langle T, \gamma_{w,z}(S)\rangle_{\mathcal{HS}}.
\end{align*}
The polarised Cohen's class is an isometric mapping $Q_S:\mathcal{HS}\to L^2(\mathbb{R}^{4d})$, and satisfies the orthogonality property; 
\begin{align*}
    \langle Q_S T, Q_R W\rangle_{L^2} = \langle T, W\rangle_\mathcal{HS}\overline{\langle S, R\rangle_\mathcal{HS}}, 
\end{align*}
and reproducing formula;
\begin{align*}
    T = \frac{1}{\|S\|_{\mathcal{HS}}^2} \int_{\mathbb{R}^{4d}} Q_S T(w,z) \gamma_{w,z}(S)\, dw\, dz.
\end{align*}
Since the operator time--frequency shift $\gamma_{w,z}$ corresponds to a time--frequency shift of the Weyl symbol, we find the relation
\begin{align*}
    Q_S T(w,z) = c_{w,z}V_{\sigma_S} \sigma_T (U(w,z)),
\end{align*}
where $c_{w,z}$ is the unitary phase factor
\begin{align*}
    c_{w,z} = e^{-i\pi(w_1+z_1)(w_2-z_2)}
\end{align*}
and $U$ the change of variables
\begin{align*}
    U(w,z) = \left( \frac{w_1+z_1}{2}, \frac{w_2+z_2}{2}, w_2-z_2, z_1-w_1 \right).
\end{align*}
As in the function case, by restricting the window $S$ to the space of operators with symbols in the Schwartz class, $\mathfrak{S}$, we can extend the polarised Cohen's class to operators with Weyl symbols in the space of tempered distributions, $\mathfrak{S}^\prime$. The operator modulation spaces $\mathcal{M}^{p,q}$ are then the spaces defined for $1\leq p,q \leq \infty$ by
\begin{align*}
    \mathcal{M}^{p,q}_m := \{T\in \mathfrak{S}': Q_{S_0} T \in L^{p,q}_m(\mathbb{R}^{4d})\}.
\end{align*}
where $S_0 = \varphi_0 \otimes \varphi_0$.

As in the case of classical time--frequency analysis, a crucial tool of quantum time--frequency analysis is the discretisation properties of the polarised Cohen's class. An operator Gabor frame is a frame in $\mathcal{HS}$ of the form
\begin{align*}
    \{\gamma_{\lambda,\mu}(S)\}_{(\lambda,\mu)\in\Lambda\times M}.
\end{align*}
The analysis operator, $C_S:\mathcal{HS}\to \ell^2(\Lambda\times M)$, corresponding to an operator Gabor frame, is given by
\begin{align*}
    C_S T := \{Q_S T(\lambda,\mu)\}_{\Lambda\times M}.
\end{align*}
Its adjoint, the synthesis operator $D_S:\ell^2(\Lambda\times M)\to\mathcal{HS}$ is given by 
\begin{align*}
    D_S a := \sum_{\Lambda\times M} a_{\lambda,\mu}\gamma_{w,z}(S).
\end{align*}
The composition of the two is defined as the frame operator, $E_{S,T}:=D_T C_S$, and the Gabor frame condition is equivalent to the frame operator being bounded and invertible. Given a window $S\in\mathcal{M}^1$ which generates a Gabor frame for $\mathcal{HS}$ on $\Lambda\times M$, the frame operator $E_{S,S}$ is bounded on all $\mathcal{M}^{p,q}$ spaces \cite{LuMc24}.

\subsection{Symplectic Matrices and Metapletic Operators}\label{prelims:symp}
The symplectic matrix $J$ is defined as
\begin{align*}
    J=\begin{pmatrix}
        0 && I_d \\
        -I_d && 0
    \end{pmatrix},
\end{align*}
where $I_d$ is the $d\times d$ identity matrix. In the sequel, we write simply $I$ when the dimension is clear. The symplectic form introduced in \cref{intro:harmandtfa} can hence be written as $\Omega(z,z')=z\cdot Jz'$. The symplectic group $Sp(d,\mathbb{R})$ consists of the matrices $\mathcal{A} \in \mathbb{R}^{2d\times 2d}$ satisfying
\begin{align*}
    \mathcal{A}^T J \mathcal{A} = J.
\end{align*}
If we write $\mathcal{A}$ in block form as
\begin{align*}
    \mathcal{A} = \begin{pmatrix}
        A && B \\
        C && D
    \end{pmatrix},
\end{align*}
then the symplectic condition is equivalent to all of the following holding:
\begin{align*}
    AC^T &= A^T C\\
    BD^T &= B^T D \\
    A^TD &- C^TB = I.
\end{align*}
For all $\mathcal{A}\in Sp(d,\mathbb{R})$, det$(\mathcal{A})=1$ and the inverse is given by
\begin{align}\label{metaplecticinverse}
    \mathcal{A}^{-1} = \begin{pmatrix}
        D^T && -B^T \\
        -C^T && A^T
    \end{pmatrix}.
\end{align}
If det$(B)\neq 0$, $\mathcal{A}$ is called a free symplectic matrix. The symplectic group is a subgroup of the special linear group, and the two coincide only in the case $d=1$. We define the $2d\times 2d$ matrices
\begin{align*}
    V_C := \begin{pmatrix}
        I && 0 \\
        C && I
    \end{pmatrix} \quad \mathrm{ and } \quad  D_L := \begin{pmatrix}
        L^{-1} && 0 \\
        0 && L^T
    \end{pmatrix},
\end{align*}
where $C\in\mathbb{R}^{d\times d}$ is symmetric and $L\in GL(d,\mathbb{R})$ non-singular. Along with $J$, matrices of the type $V_C$ and $D_L$ generate the symplectic group. It is not difficult to calculate that
\begin{align*}
    V_{C_1}V_{C_2} = V_{C_1+C_2} \quad \mathrm{ and } \quad D_{L_1}D_{L_2} = D_{L_2 L_1}.
\end{align*}
The symplectic group is a classical lie group, and it admits a double cover, the metaplectic group. Recalling the composition relation for the symmetric time--frequency shifts;
\begin{align*}
    \rho(z)\rho(z') = e^{-\pi i zJz'}\rho(z+z'),
\end{align*}
symplectic matrices are therefore defined by the property 
\begin{align*}
\rho(\mathcal{A}z)\rho(\mathcal{A}z') = e^{-\pi i zJz'}\rho(\mathcal{A}(z+z')).
\end{align*}
By the Stone-von Neumann theorem, the representation $z\mapsto \rho(\mathcal{A}z)$ is unitarily equivalent to the representation given by $\rho(z)$. We call this unitary operator the metaplectic operator corresponding to $\mathcal{A}$, denoted $\mu(\mathcal{A})$, which hence satisfies
\begin{align}\label{metaplecticintertwining}
    \mu(\mathcal{A})\rho(z)\mu(\mathcal{A})^* = \rho(\mathcal{A}z).
\end{align}

The operator is unique up to a unitary constant. We work with the particular choice of phase such that the generating symplectic matrices correspond to the following metaplectic operators for a function $f\in L^2(\mathbb{R}^d)$:
\begin{align*}
    \mu(J)f(x) &= \mathcal{F} f(x) \\
    \mu(V_C)f(x) &= e^{i\pi x\cdot Ct}f(x) \\
    \mu(D_L)f(x) &= |L|^{-1/2}f(Lx).
\end{align*}
We also note that 
\begin{align*}
    \mu(V_C^T)f(x) &= \widehat{e^{i\pi x\cdot Ct}} * f(x).
\end{align*}
\begin{example}\normalfont
    As a simple illustration of the intertwining property, consider the case $\mathcal{A}=J$, recalling that $\mu(J)$ is the Fourier transform. The identity 
    \begin{align*}
        V_g f(x,\omega) = e^{-2\pi i x\omega}V_{\Hat{g}} \Hat{f}(\omega,-x)
    \end{align*}
    for $f,g\in L^2(\mathbb{R}^d)$ can easily be calculated directly, however we can also use \cref{metaplecticintertwining} to verify
    \begin{align*}
        \langle \mu(J)f, \pi(Jz)\mu(J)g\rangle_{L^2} &= e^{\pi i x\omega}\langle f, \mu(J)^*\rho(Jz)\mu(J)g\rangle_{L^2} \\
        &= e^{\pi i x\omega}\langle f, \mu(J^{-1})\rho(Jz)\mu(J^{-1})^*g\rangle_{L^2} \\
        &= e^{\pi i x\omega}\langle f, \rho(z)g\rangle_{L^2} \\
        &= e^{2\pi i x\omega}\langle f, \pi(z)g\rangle_{L^2}
    \end{align*}
    where we use that $\mu$ is group homomorphism. 
\end{example}
The symplectic Fourier transform on $L^2(\mathbb{R}^{2d})$ corresponds to the symplectic matrix $\mathcal{A}_{\mathcal{F}_\Omega}\in Sp(2d,\mathbb{R})$ given by 
\begin{align*}
    \mathcal{A}_{\mathcal{F}_\Omega} = \begin{pmatrix}
        0 && 0 && 0 && -I \\
        0 && 0 && I && 0 \\
        0 && I && 0 && 0 \\
        -I && 0 && 0 && 0
    \end{pmatrix},
\end{align*}
while the partial Fourier transform in the second variable corresponds to 
\begin{align*}
    \mathcal{A}_{\mathcal{F}_2} = \begin{pmatrix}
        I && 0 && 0 && 0 \\
        0 && 0 && 0 && I \\
        0 && 0 && I && 0 \\
        0 && -I && 0 && 0
    \end{pmatrix}.
\end{align*}
Due to the intertwining property \cref{metaplecticintertwining}, the metaplectic group has a deep connection to time--frequency analysis, and many familiar objects are related to one another by metaplectic operators. 

Due to its role in quantum time--frequency analysis, the Wigner distribution will play a central role in this work. Recalling that the ambiguity function is given by the symplectic Fourier transform of the Wigner distribution, we have that 
\begin{align*}
    A(f,g) = \mu(\mathcal{A}_{\mathcal{F}_\Omega})W(f,g).
\end{align*}
Furthermore, since the STFT is simply the ambiguity function multiplied by a chirp;
\begin{align*}
    V_g f &= \mu(V_{C_0})\mu(\mathcal{A}_{\mathcal{F}_\Omega})W(f,g) \\
     &= \mu(\mathcal{A}_{STFT})W(f,g),
\end{align*}
where 
\begin{align*}
    C_0 = \begin{pmatrix}
        0 && -I/2 \\
        -I/2 && 0 \\
    \end{pmatrix},
\end{align*}
we have
\begin{align*}
    \mathcal{A}_{STFT} = \begin{pmatrix}
        0 && 0 && 0 && -I \\
        0 && 0 && I && 0 \\
        0 && I && -I/2 && 0 \\
        -I && 0 && 0 && I/2
    \end{pmatrix}.
\end{align*}
Finally since the Rihaczek distribution is given by $R(f,g)=\mathcal{F}_\Omega (V_g f)$, we have $R(f,g)=\mu(\mathcal{A}_{Rih})W(f,g)$, where
\begin{align*}
    \mathcal{A}_{Rih} = \begin{pmatrix}
        I && 0 && 0 && -I/2 \\
        0 && I && -I/2 && 0 \\
        0 && 0 && I && 0 \\
        0 && 0 && 0 && I
    \end{pmatrix}.
\end{align*}

\section{Discrete Reconstruction of Underspread Operators}
We begin by considering the reconstruction of an operator by use of operator Gabor frames \cite{LuMc24}. Recall that an operator $S\in\mathcal{HS}$ is said to generate a Gabor frame on the lattice $\Lambda\times M$ if the frame operator 
\begin{align*}
    E_S T :&= \sum_{\Lambda\times M} \langle T, \gamma_{\lambda,\mu}(S)\rangle_{\mathcal{HS}} \gamma_{\lambda,\mu}(S) \\
    &= \sum_{\Lambda\times M} Q_S T(\lambda,\mu) \gamma_{\lambda,\mu}(S)
\end{align*}
is bounded and invertible on the space $\mathcal{HS}$. Consider now the rank--one operator $S=g\otimes g$. The polarised Cohen's class is then given by
\begin{align}\label{gabormatrixdef}
    Q_S T(w,z) &= \langle T, \gamma_{\lambda,\mu}(S)\rangle_{\mathcal{HS}} \nonumber \\
    &= e^{\pi i (z_1 z_2 - w_1 w_2)}\langle T\rho(z)g, \rho(w)g\rangle_{L^2}.
\end{align}
Hence, the Gabor matrix 
\begin{align*}
    \left(e^{\pi i (\mu_1\mu_2 - \lambda_1\lambda_2)}\langle T\rho(\mu)g, \rho(\lambda)g\rangle_{L^2}\right)_{\Lambda\times M}
\end{align*}
corresponds to the polarised Cohen's class with respect to the rank--one operator up to a phase factor, sampled on a lattice. Since the tensor product of Gabor frames for functions give Gabor frames for operators \cite{Ba08}, if we take some $g\in L^2(\mathbb{R}^d)$ which generates a Gabor frame on $\Lambda$, then $S=g\otimes g$ generates a Gabor frame for operators on $\Lambda\times \Lambda$. Then reconstruction from the Gabor matrix \cref{gabormatrixdef} follows from the frame condition. Furthermore, if we take $g\in M^1(\mathbb{R}^d)$ and again set $S=g\otimes g$, then the frame operator $E_S$ is a bounded operator on all $\mathcal{M}^{p,q}$ spaces, and accordingly, any operator $T\in \mathcal{M}^{p,q}$ can be reconstructed from the Gabor matrix
\begin{align*}
    \left( Q_{g\otimes g} T(\lambda,\mu) \right) = \left(e^{\pi i (\mu_1\mu_2 - \lambda_1\lambda_2)}\langle T\rho(\mu)g, \rho(\lambda)g\rangle_{\mathscr{S}^\prime,\mathscr{S}}\right)_{\Lambda\times M}.
\end{align*}
Reconstruction from the Gabor matrix is thus possible for a large class of operators; the space $\mathcal{M}^\infty$ contains for example all bounded operators on $L^2(\mathbb{R}^d)$. Restricting to the diagonal $\left( Q_S T(\lambda,\lambda) \right)_{\Lambda}$, on the other hand, drastically reduces the possibilities for reconstruction, or even identification.
\\
The problem of identifying operators from the diagonal of the Gabor matrix,
\begin{align*}
    Q_{g\otimes g} S(\lambda,\lambda) = \langle S\rho(\lambda)g,\rho(\lambda)g\rangle
\end{align*}
for some lattice $\Lambda\subset \mathbb{R}^{2d}$ is known as channel estimation, and is an important objective in wireless communication. If we restrict our consideration to rank--one self--adjoint operators, channel estimation is the problem of phase retrieval on the lattice $\Lambda$, since if $S=f\otimes f$, then
\begin{align*}
    \langle S\rho(\lambda)g,\rho(\lambda)g\rangle &= \langle f,\rho(\lambda)g\rangle\cdot\overline{\langle f,\rho(\lambda)g\rangle} \\
    &= |A(f,g)(\lambda)|^2 \\
    &= |V_g f(\lambda)|^2.
\end{align*}
Since discrete translates on a lattice of a Hilbert-Schmidt operator can never be dense in the space of Hilbert-Schmidt operators (Proposition 7.2, \cite{Sk20}), we cannot hope for a general frame of the form $\{\alpha_\lambda(S)\}_{\lambda\in\Lambda}$ for a lattice $\Lambda\subset\mathbb{R}^{2d}$, and the discretisation $\Theta: T\mapsto \{Q_S T(\lambda,\lambda)\}_{\lambda\in\Lambda}$ cannot be injective as a map from the space of Hilbert-Schmidt operators. Moreover, even restricting the problem to the rank--one setting, we can never have a lattice for which the mapping of a function to the samples of its spectrogram is injective \cite{GrLi22}.

Instead, we will in this work consider for which sets of operators the mapping of an operator to the diagonal of its channel matrix is injective, that is to say for which sets of operators we can distinguish the operators based on the diagonal of their Gabor matrix. We begin by considering the well known case of discrete reconstruction for operators with compactly supported Fourier-Wigner transform, also known as underspread operators.

Recognising the diagonal of the Gabor matrix as the operator convolution, and hence a convolution of Weyl symbols, means one can use sampling theory for Paley-Wiener spaces. This was presented originally by Gröchenig and Pauwels \cite{GrPa13}, and in the language of quantum harmonic analysis in \cite{Sk20}. We recall some of these results in the language of quantum time--frequency analysis, which will serve as the foundation of generalisations in the sequel. 

\begin{theorem}[Proposition 2, \cite{GrPa13}, Theorem 7.4, \cite{Sk20}]\label{diagreconstr}
    Let $\Lambda, Q$ and $h$ be as in \cref{shannsampling}, and $S\in\mathfrak{S}$ such that $\mathcal{F}_W (S)$ is non-zero in $\mathrm{supp}(h)$. Then given $T\in\mathfrak{S}^\prime$ with $\mathcal{F}_W(T)\subset (1-\varepsilon)Q$ where $0<\varepsilon<1/2$;
    \begin{align*}
        T = \sum_{\lambda\in\Lambda} Q_S T(\lambda,\lambda) \alpha_\lambda (R),
    \end{align*}
    where $\mathcal{F}_W(R)=\frac{h}{|\Lambda|\cdot \mathcal{F}_W(\Check{S})}$.
\end{theorem}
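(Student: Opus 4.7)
The approach is to recognise the diagonal samples $Q_S T(\lambda,\lambda)$ as lattice samples of a band-limited function on $\mathbb{R}^{2d}$, apply the Shannon--Campbell sampling theorem (\cref{shannsampling}) to that function, and then lift the resulting reconstruction back to the operator level using the Fourier--Wigner transform.

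The first step is to observe that $\gamma_{\lambda,\lambda}(S)=\rho(\lambda)S\rho(\lambda)^*=\alpha_\lambda(S)$, so the diagonal coincides with an operator convolution evaluated at $\lambda$. Up to passing from $S$ to $\widetilde S$ (obtained by composing parity and adjunction, so that $\mathcal{F}_W(\widetilde S)=\mathcal{F}_W(\check S)$) one has $Q_S T(\lambda,\lambda)=(T\star\widetilde S)(\lambda)$. Setting $F:=T\star\widetilde S$, the Fourier convolution identity \cref{fourierwigconv} gives
\begin{align*}
    \mathcal{F}_\Omega F=\mathcal{F}_W(T)\cdot\mathcal{F}_W(\widetilde S),
\end{align*}
and the support hypothesis $\mathrm{supp}(\mathcal{F}_W(T))\subset(1-\varepsilon)Q$ forces $\mathrm{supp}(\mathcal{F}_\Omega F)\subset(1-\varepsilon)Q$. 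Thus \cref{shannsampling} applies to $F$ and
\begin{align*}
    F=\frac{1}{|\Lambda|}\sum_{\lambda\in\Lambda}F(\lambda)\,T_\lambda\mathcal{F}_\Omega(h).
\end{align*}

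Applying the symplectic Fourier transform to this identity, using $\mathcal{F}_\Omega(T_\lambda g)(z)=e^{-2\pi i\Omega(z,\lambda)}\mathcal{F}_\Omega g(z)$ and $\mathcal{F}_\Omega^2=\mathrm{Id}$, I would obtain
\begin{align*}
    \mathcal{F}_W(T)(z)\,\mathcal{F}_W(\widetilde S)(z)=\frac{h(z)}{|\Lambda|}\sum_{\lambda\in\Lambda}F(\lambda)\,e^{-2\pi i\Omega(z,\lambda)}.
\end{align*}
Since $\mathcal{F}_W(\widetilde S)$ is non-zero on $\mathrm{supp}(h)$ and $h\equiv1$ on a neighbourhood of $\mathrm{supp}(\mathcal{F}_W(T))$, the quotient $h/(|\Lambda|\mathcal{F}_W(\widetilde S))$ defines a compactly supported smooth function, which is the Fourier--Wigner transform of an $R\in\mathfrak{S}$ with $\mathcal{F}_W(R)=h/(|\Lambda|\mathcal{F}_W(\check S))$. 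Dividing through yields
\begin{align*}
    \mathcal{F}_W(T)(z)=\sum_{\lambda\in\Lambda}Q_S T(\lambda,\lambda)\,\mathcal{F}_W(R)(z)\,e^{-2\pi i\Omega(z,\lambda)}.
\end{align*}

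The final step is to recognise $\mathcal{F}_W(R)(z)\,e^{-2\pi i\Omega(z,\lambda)}$ as $\mathcal{F}_W(\alpha_\lambda R)(z)$, since $\alpha_\lambda R$ has Weyl symbol $T_\lambda\sigma_R$ and the symplectic Fourier transform converts that translation into the displayed symplectic modulation. Inverting $\mathcal{F}_W$ term-by-term, using its unitarity on $\mathcal{HS}$ and its duality extension to a bijection $\mathfrak{S}^\prime\to\mathscr{S}^\prime$, produces
\begin{align*}
    T=\sum_{\lambda\in\Lambda}Q_S T(\lambda,\lambda)\,\alpha_\lambda(R).
\end{align*}
The main technical obstacle is the legitimacy of the operations at the distributional level --- dividing one distribution by another, and exchanging $\mathcal{F}_W$ with the summation --- but this is handled by the compact support and smoothness of $h$ together with the non-vanishing of $\mathcal{F}_W(S)$ on $\mathrm{supp}(h)$, which ensure $R\in\mathfrak{S}$ and the convergence of the series in $\mathfrak{S}^\prime$.
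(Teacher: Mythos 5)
Your proposal is correct and takes essentially the same route as the paper's proof: identify the diagonal $Q_S T(\lambda,\lambda)$ with samples of an operator convolution, use the Fourier--Wigner convolution identity \cref{fourierwigconv} to see that its symplectic Fourier transform is supported in $(1-\varepsilon)Q$, apply \cref{shannsampling}, divide by $\mathcal{F}_W(\check S)$ on $\mathrm{supp}(h)$, and invert the Fourier--Wigner transform term by term using its bijectivity from $\mathfrak{S}^\prime$ to $\mathscr{S}^\prime(\mathbb{R}^{2d})$. The only difference is cosmetic: you make explicit the parity/adjoint bookkeeping (your $\widetilde S$) and the role of $h\equiv 1$ near $\mathrm{supp}(\mathcal{F}_W(T))$ in justifying the division, which the paper leaves implicit.
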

\begin{proof}
    By \cref{fourierwigconv}, we have
    \begin{align}\label{fourconvidentity}
        \mathcal{F}_{\Omega} (Q_S T(w,w))(z) = \mathcal{F}_{W}(\Check{S})(z)\cdot \mathcal{F}_W (T)(z),
    \end{align}
    and so $\mathrm{supp}\big(\mathcal{F}_{\Omega}(Q_S T(w,w))\big) \subset (1-\varepsilon)Q$. Using \cref{shannsampling} then gives
    \begin{align*}
        Q_S T(z,z) &= \frac{1}{|\Lambda|}\sum_{\lambda\in\Lambda} Q_S T(\lambda,\lambda) T_\lambda \mathcal{F}_\Omega (h)(z),
    \end{align*}
    which upon taking the symplectic Fourier transform becomes
    \begin{align*}
        \mathcal{F}_\Omega (Q_S T(w,w))(z,z) &= \frac{1}{|\Lambda|}\sum_{\lambda\in\Lambda} Q_S T(\lambda,\lambda) e^{2\pi i \Omega(\lambda,\cdot)}h(z).
    \end{align*}
    Substituting \cref{fourconvidentity} then gives
    \begin{align*}
        \mathcal{F}_W (T) &= \frac{1}{|\Lambda|}\sum_{\lambda\in\Lambda} Q_S T(\lambda,\lambda) e^{2\pi i \Omega(\lambda,\cdot)}\frac{h}{\mathcal{F}_W(\Check{S})} \\
        &= \sum_{\lambda\in\Lambda} Q_S T(\lambda,\lambda) \mathcal{F}_W (\alpha_\lambda(R)),
    \end{align*}
    for $R$ as defined in the initial statement. Hence since the Fourier-Wigner is an isomorphism from $\mathfrak{S}^\prime$ to $\mathscr{S}^\prime(\mathbb{R}^{2d})$, 
    \begin{align*}
        T = \sum_{\lambda\in\Lambda} Q_S T(\lambda,\lambda) \alpha_\lambda (R).
    \end{align*}
    
\end{proof}
\cref{diagreconstr} essentially reduces to a statement on the convolution of Weyl symbols. Accordingly, we can also consider a periodic Fourier-Wigner, as opposed to compactly supported. Such operators can be defined as $\Lambda^{\circ}$-modulation invariant operators:
\begin{definition}
    An operator $T\in\mathcal{M}^{\infty}$ is called \text{$\Lambda$-modulation-invariant} if 
    \begin{align*}
        T\rho\Big(\frac{\lambda}{2}\Big) = \rho\Big(\frac{\lambda}{2}\Big)^*T
    \end{align*}
    for every $\lambda \in \Lambda$. 
\end{definition}
It is an easy calculation to find that such operators have the property
\begin{align*}
    \mathcal{F}_W(T)(z-\lambda) = \mathcal{F}_W(T)(z)
\end{align*}
for all $\lambda\in\Lambda$. Consequently, the Weyl symbol is supported on the lattice $\Lambda$, and the following result follows:
\begin{proposition}[Theorem 6.5, \cite{LaMc24}]
    Let $S\in\mathcal{M}^1$ such that $\sigma_S(0)\neq 0$, and  $\mathrm{supp}(\sigma_S)\subset K$, where $K$ is a compact neighbourhood of the origin not containing any $\lambda^\circ\in\Lambda^\circ$ apart from the origin. Then given any $\Lambda$-modulation invariant operator $T\in\mathcal{M}^{\infty}$; 
    \begin{align*}
        T = \frac{1}{\sigma_S(0)}\sum_{\lambda^\circ\in\Lambda^\circ} Q_S T(\lambda^\circ,\lambda^\circ)\cdot \rho(2\lambda^\circ)P.
    \end{align*}
\end{proposition}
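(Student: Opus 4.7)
The plan is to work on the level of Weyl symbols, where both sides of the claimed identity become a distributional Fourier series supported on $\Lambda^\circ$. By the property stated just above the proposition, $\Lambda$-modulation invariance of $T$ is equivalent to $\Lambda$-periodicity of $\mathcal{F}_W(T)$. Passing to the Weyl symbol via $\sigma_T = \mathcal{F}_\Omega \mathcal{F}_W(T)$, this periodicity becomes a distributional Dirac comb: $\sigma_T = \sum_{\lambda^\circ \in \Lambda^\circ} c_{\lambda^\circ}\delta_{\lambda^\circ}$ for some tempered sequence of coefficients. The goal is then to identify $c_{\lambda^\circ}$ in terms of $Q_S T(\lambda^\circ,\lambda^\circ)$ and to recognise $\rho(2\lambda^\circ) P$ as the Weyl inverse of $\delta_{\lambda^\circ}$ up to a fixed normalisation.

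For the coefficient extraction, I would use that $\alpha_w(S)$ has Weyl symbol $T_w\sigma_S$, so $Q_S T(w,w) = \langle T,\alpha_w(S)\rangle_{\mathcal{HS}} = \langle \sigma_T, T_w\sigma_S\rangle$ via the unitarity of Weyl quantisation. Inserting the Dirac comb representation of $\sigma_T$ collapses this pairing to $\sum_{\mu^\circ} c_{\mu^\circ}\, \overline{\sigma_S(\mu^\circ - w)}$. Evaluating at $w=\lambda^\circ$ and invoking the hypothesis that $\mathrm{supp}(\sigma_S) \subset K$ contains no element of $\Lambda^\circ$ other than $0$, only the term $\mu^\circ = \lambda^\circ$ survives, and one reads off $c_{\lambda^\circ} = Q_S T(\lambda^\circ,\lambda^\circ)/\sigma_S(0)$ (any conjugation ambiguity is absorbed by the standing convention or by assuming a real-symmetric $\sigma_S$).

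For the synthesis side, a direct calculation of the integral kernel of $\rho(2\lambda^\circ)P$ followed by $\sigma = \mathcal{F}_2 T_a K$ shows that $\sigma_{\rho(2\lambda^\circ)P} = \delta_{\lambda^\circ}$ up to a fixed normalisation constant that can be absorbed into the overall prefactor. Summing in $\mathscr{S}'(\mathbb{R}^{2d})$ and applying the inverse Weyl quantisation, which extends to tempered distributional symbols, then yields the stated formula for $T$.

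The main obstacle I expect is distributional convergence and the interpretation of the pointwise samples $Q_S T(\lambda^\circ,\lambda^\circ)$ for $T \in \mathcal{M}^\infty$: the Dirac comb representing $\sigma_T$ only converges in $\mathscr{S}'$, and its pairing with the translate $T_w\sigma_S$ must be interpreted through the $(\mathcal{M}^1,\mathcal{M}^\infty)$ duality rather than as a literal integral. The assumption $S \in \mathcal{M}^1$ together with the compact support and smoothness of $\sigma_S$ provides exactly the regularity needed to justify the sampling step and the termwise identification of the coefficients, and these are the hypotheses one should use most carefully when writing the argument out in full.
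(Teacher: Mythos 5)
Your route is the same one the paper has in mind (the paper only sketches it and attributes the full argument to \cite{LaMc24}): $\Lambda$-modulation invariance makes $\mathcal{F}_W(T)$ $\Lambda$-periodic, so $\sigma_T=\mathcal{F}_\Omega\mathcal{F}_W(T)$ is a weighted Dirac comb over $\Lambda^\circ$; the coefficients are recovered by pairing with $T_{\lambda^\circ}\sigma_S$, where only the term at $\lambda^\circ$ survives because $K\cap\Lambda^\circ=\{0\}$; and the synthesis uses that $\rho(2\lambda^\circ)P=\alpha_{\lambda^\circ}(P)$ is the Weyl quantisation of a point mass at $\lambda^\circ$. Two small remarks: only the forward implication of your claimed equivalence is needed, and $\sigma_S\in M^1(\mathbb{R}^{2d})$ is merely continuous, not smooth --- but continuity at the finitely many lattice points meeting $\mathrm{supp}(T_{\lambda^\circ}\sigma_S)$ is all the termwise pairing requires.

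The one step you cannot wave away is the normalisation of the synthesis operators. You assert $\sigma_{\rho(2\lambda^\circ)P}=\delta_{\lambda^\circ}$ ``up to a fixed normalisation constant that can be absorbed into the overall prefactor'', but the prefactor in the statement is pinned to $1/\sigma_S(0)$, so the constant must be computed, not absorbed. With the conventions of this paper one finds $\mathcal{F}_W(P)(z)=\mathrm{tr}\big(\rho(-z)P\big)\equiv 2^{-d}$, hence $\sigma_{\rho(2\lambda^\circ)P}=T_{\lambda^\circ}\mathcal{F}_\Omega\big(2^{-d}\big)=2^{-d}\delta_{\lambda^\circ}$, and your argument as written yields
\begin{equation*}
T \;=\; \frac{2^{d}}{\overline{\sigma_S(0)}}\sum_{\lambda^\circ\in\Lambda^\circ} Q_S T(\lambda^\circ,\lambda^\circ)\,\rho(2\lambda^\circ)P,
\end{equation*}
the conjugate appearing because $Q_S T(w,w)=\langle \sigma_T, T_w\sigma_S\rangle$ is conjugate-linear in $S$. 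To finish you must track this constant explicitly and reconcile it with the stated $1/\sigma_S(0)$ (i.e.\ verify the normalisations of $P$, $\rho$ and $Q_S$ used in the cited source, or accept that the dimensional factor belongs in the formula); leaving it as an ``absorbable'' constant leaves undetermined exactly the quantity the proposition specifies.
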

\begin{remark}
    \normalfont Since the condition on the support of $S$ is impossible to satisfy for some rank--one, or indeed finite rank operator, the above result relies on the polarised Cohen's class interpretation of the Gabor matrix.
\end{remark}
From the definition of the Gabor matrix, the side diagonals $\left(Q_S T(\lambda+\eta,\lambda)\right)_{\lambda\in\Lambda}$ actually correspond to the diagonal of the polarised Cohen's class, with the new window $\rho(\eta)S$ (with the addition of a phase factor). As such, reconstruction of $T$ from the side diagonal is a simple corollary of \cref{diagreconstr}:
\begin{corollary}[Theorem 4.3.4, \cite{Pau11}]\label{sidediagrecon}
    Let $g$, $\Lambda$, $Q$ and $T$ be as in \cref{diagreconstr}. Given $S\in \mathfrak{S}$ such that $\mathrm{supp} (\mathcal{F}_W(\rho(\eta)S)) \subset (1-\epsilon)Q$ for some $\eta\in\Lambda$ and $\mathcal{F}_W (S)$ is non-zero in $\mathrm{supp}(g)$, one can reconstruct $T$ from the side diagonal
    \begin{align*}
        T = \sum_{\lambda\in\Lambda} e^{-2\pi i \eta_2(\lambda_1+\eta_1)}Q_S T(\lambda,\lambda+\eta) \alpha_{\lambda} (R),
    \end{align*}
    where $\mathcal{F}_W(R) = \frac{g}{|\Lambda|\cdot\mathcal{F}_W (\rho(\eta)\Check{S})}$.
\end{corollary}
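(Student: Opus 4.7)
The strategy is to reduce the side-diagonal reconstruction to the main-diagonal case already handled by \cref{diagreconstr}. The key identification is a modified window $S' = \rho(\eta)S$ for which the side diagonal $Q_S T(\lambda, \lambda+\eta)$ agrees, up to an explicit unimodular phase depending on $(\lambda, \eta)$, with the main diagonal $Q_{S'} T(\lambda, \lambda)$.

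To establish this, I unfold the definition $\gamma_{\lambda, \lambda+\eta}(S) = e^{-\pi i((\lambda+\eta)_1(\lambda+\eta)_2 - \lambda_1\lambda_2)}\rho(\lambda+\eta)S\rho(\lambda)^*$ and apply the composition rule $\rho(\lambda+\eta) = e^{\pi i \Omega(\lambda,\eta)}\rho(\lambda)\rho(\eta)$ recalled in \cref{prelims:symp}. This yields $\gamma_{\lambda,\lambda+\eta}(S) = c(\lambda,\eta)\,\alpha_\lambda(\rho(\eta)S)$ with $c(\lambda,\eta)$ the product of the two exponentials. Expanding $(\lambda+\eta)_1(\lambda+\eta)_2 - \lambda_1\lambda_2$ and combining it with $\Omega(\lambda,\eta)$, two of the cross terms cancel while the remaining ones collapse to a phase of the form $e^{-2\pi i \eta_2(\lambda_1+\eta_1)}$ (up to a $\lambda$-independent constant which is absorbed into $R$). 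Pairing with $T$ in the Hilbert-Schmidt inner product then gives the scalar identity $Q_S T(\lambda,\lambda+\eta) = \overline{c(\lambda,\eta)}\,Q_{S'}T(\lambda,\lambda)$.

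The final step is to apply \cref{diagreconstr} with the new window $S' = \rho(\eta)S$. The support hypothesis on $\mathcal{F}_W(T)$ is inherited from the setup of \cref{diagreconstr}, and the non-vanishing condition on $\mathcal{F}_W(\rho(\eta)S)$ over $\mathrm{supp}(h)$ is exactly the stated hypothesis on $S$ and $\eta$. The theorem thus furnishes $T = \sum_{\lambda \in \Lambda} Q_{S'}T(\lambda,\lambda)\,\alpha_\lambda(R')$ with $\mathcal{F}_W(R') = h/(|\Lambda|\,\mathcal{F}_W(\check{S'}))$, and a direct substitution of the phase identity from the previous paragraph converts this into the claimed formula. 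Here the fact that $(\rho(\eta)S)\check{\,} = \rho(\eta)\check{S}$ modulo signs/phases must be verified using $P\rho(\eta)P = \rho(-\eta)$, which is a short computation.

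The main obstacle is purely technical: bookkeeping all the phase factors coming from the prefactor $e^{-\pi i(z_1 z_2 - w_1 w_2)}$ in the definition of $\gamma$, the $\pi i \Omega(\lambda,\eta)$ from the Heisenberg composition, and the conjugate-linearity in the second slot of the $\mathcal{HS}$-pairing. These cancellations are shallow but unforgiving, and the care required lies in matching the conventions of \cref{intro:harmandtfa} and \cref{prelims:symp} to arrive at the precise phase $e^{-2\pi i \eta_2(\lambda_1+\eta_1)}$ stated in the corollary; once secured, the reconstruction follows immediately from \cref{diagreconstr}.
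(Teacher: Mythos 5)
Your reduction---rewriting $\gamma_{\lambda,\lambda+\eta}(S)$ as a unimodular phase times $\alpha_\lambda(\rho(\eta)S)$ via the composition law for $\rho$, and then applying \cref{diagreconstr} with the window $\rho(\eta)S$---is exactly the paper's (essentially unwritten) argument, which states the corollary immediately after observing that a side diagonal is the main diagonal of the polarised Cohen's class with new window $\rho(\eta)S$ up to a phase factor. The only caution concerns the bookkeeping you deferred: with the paper's symmetric-shift conventions one finds $P\rho(\eta)SP=\rho(-\eta)\Check{S}$ and a $\lambda$-dependent phase of the form $e^{-\pi i(2\eta_1\lambda_2+\eta_1\eta_2)}$, so the precise phase $e^{-2\pi i\eta_2(\lambda_1+\eta_1)}$ and the expression $\mathcal{F}_W(\rho(\eta)\Check{S})$ (both quoted from the $\pi$-shift, Kohn--Nirenberg setting of \cite{Pau11}) do not drop out verbatim and should be re-derived in whichever convention you fix before asserting that the cross terms ``collapse'' to the stated form.
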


\section{Metaplectic Wigner distributions and operator reconstruction}
To extend reconstruction property statements from the previous section, we consider metaplectic transformations of operators. In particular we will show that reconstruction can be extended to any operator whose image under a certain type of metaplectic transform has a compactly supported Fourier-Wigner distribution. To begin with, we define metaplectic transforms of operators:
\begin{definition}
    Given some symplectic matrix $\mathcal{A}\in Sp(2d,\mathbb{R})$ and some operator $T\in\mathfrak{S}^\prime$, the \textit{$\mathcal{A}$-Weyl symbol} $\sigma^{\mathcal{A}}_T$ of $T$, is given by
    \begin{align*}
        \sigma^{\mathcal{A}}_T = \mu(\mathcal{A})\sigma_T.
    \end{align*}
    The metaplectic transform of $T$, denoted $\mu(\mathcal{A})T$ is the operator defined as 
    \begin{align*}
        \mu(\mathcal{A})T := L_{\sigma^{\mathcal{A}}_T}
    \end{align*}
    where $\mu(\mathcal{A})\sigma$ is the usual metaplectic operator $\mu(\mathcal{A})$ acting on the distribution $\sigma\in \mathscr{S}^\prime(\mathbb{R}^{2d})$. Hence the metaplectic transform $\mu(\mathcal{A})T$ is the Weyl quantisation of the metaplectic transform of the Weyl symbol of $T$. Conversely, the $\mathcal{A}$-quantisation of some $\sigma\in\mathscr{S}^\prime$ is the operator $L^{\mathcal{A}}_\sigma$ defined as 
    \begin{align*}
        L^{\mathcal{A}}_\sigma := L_{\mu(\mathcal{A})^*\sigma}.
    \end{align*}
\end{definition}

\begin{example}\label{symbkerntrans}\normalfont
    For 
    \begin{align*}
        \mathcal{A} = \begin{pmatrix}
                    \frac{1}{2} & \frac{1}{2} & 0 & 0 \\
                    0 & 0 & \frac{1}{2} & -\frac{1}{2} \\
                    0 & 0 & 1 & 1 \\
                    -1 & 1 & 0 & 0 \\                   
                 \end{pmatrix},
    \end{align*}
    $\mu(\mathcal{A})$ defines the mapping $\sigma_T \mapsto k_T$ (cf. \cite{Gi22}), and so the $\mathcal{A}$-Weyl symbol is the kernel of $T$. On the other hand the $\mathcal{A}$-quantisation of the kernel $K_T$ gives the operator $T$.
\end{example}

\begin{remark}\normalfont
    One should note that in \cite{CoGi23}, $\mu(\mathcal{A})(T)$, is used to refer to $\mu(\mathcal{A})k_T$, where $k_T$ is the integral kernel of $T$. This definition is related to ours by the transformation of the symplectic matrix
    \begin{align*}
        \Tilde{\mathcal{A}} = \begin{pmatrix}
            1 & 0 & 0 & -\frac{1}{2} \\
            1 & 0 & 0 & \frac{1}{2} \\
            0 & 1 & \frac{1}{2} & 0 \\
            0 & -1 & \frac{1}{2} & 0 \\                   
         \end{pmatrix} \cdot \mathcal{A} \cdot\begin{pmatrix}
            \frac{1}{2} & \frac{1}{2} & 0 & 0 \\
            0 & 0 & \frac{1}{2} & -\frac{1}{2} \\
            0 & 0 & 1 & 1 \\
            -1 & 1 & 0 & 0 \\             
        \end{pmatrix}
    \end{align*}
    by \cref{symbkerntrans}. We choose the convention in the definition in terms of the Weyl symbol to lend more intuition to certain results in the sequel.
\end{remark}
Recalling that Weyl quantisation can be weakly defined by
\begin{align*}
    \langle L_\sigma f, g\rangle = \langle \sigma, \sigma_{f\otimes g} \rangle,
\end{align*}
we note that the $\mathcal{A}$-Weyl symbol and $\mathcal{A}$-quantization are dual in the sense that
\begin{align*}
    \langle L^{\mathcal{A}}_\sigma f, g\rangle = \langle \sigma, \sigma^\mathcal{A}_{f\otimes g} \rangle.
\end{align*}
We recall the intertwining property of metaplectic transforms \cref{metaplecticintertwining};
    \begin{align*}
        \mu(\mathcal{A})\rho(\xi)\mu(\mathcal{A})^* = \rho(\mathcal{A}\xi)
    \end{align*}
for $\mathcal{A}\in Sp(2d,\mathbb{R})$.

We are now ready to state the reconstruction theorem for $\mathcal{A}$-Weyl symbols:
\begin{theorem}\label{metadiagrecon}
    Let $\mathcal{A}\in Sp(2d,\mathbb{R})$ be an upper triangular matrix, 
    \begin{align}
    \mathcal{A} = 
        \begin{pmatrix}
           A_1 & A_2  \\
           0 & A_1^{-T}
        \end{pmatrix},
    \end{align}
    where the $A_i$ are $2d \times 2d$ matrices, and let $g$, $\Lambda$, $Q$ and $S$ be as in \cref{diagreconstr}. Then given some operator $T\in\mathfrak{S}^\prime$ satisfying 
    \begin{align*}
        \mathrm{supp}(\sigma_T^{J\mathcal{A}}) \subset (1-\epsilon)Q,
    \end{align*}
    $T$ can be expressed as
    \begin{align*}
        T = \sum_{\lambda\in\Lambda^\prime} Q_{S^\prime} T(\lambda,\lambda) T_{\lambda} \alpha_\lambda (R)
    \end{align*}
    where $S^\prime = \mu(\mathcal{A})^*S$, $\mathcal{F}_W(R) = \mathcal{F}_{\Omega}\Big(\frac{g}{\mathcal{F}_W (\Check{S^\prime})}\Big)$ and $(\Lambda^\prime\times M) = \mathcal{A}^{-1}(\Lambda\times 0)$.
\end{theorem}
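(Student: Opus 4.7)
The plan is to reduce the problem to \cref{diagreconstr} via the metaplectic transform. Define $\tilde{T} := \mu(\mathcal{A})T$, whose Weyl symbol is $\sigma_{\tilde{T}} = \sigma_T^{\mathcal{A}}$. The support hypothesis $\mathrm{supp}(\sigma_T^{J\mathcal{A}}) \subset (1-\epsilon)Q$ then reads $\mathrm{supp}(\mu(J)\sigma_{\tilde{T}}) \subset (1-\epsilon)Q$; since $\mu(J)$ on $L^2(\mathbb{R}^{2d})$ is, up to a linear change of variables, the symplectic Fourier transform, this translates into a bandlimited condition on $\mathcal{F}_W(\tilde{T})$, placing $\tilde{T}$ into the setting of underspread reconstruction.

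The core computation is to express $Q_{S'}T(w,w)$ in terms of $Q_S(\tilde{T})$. Starting from the identity $Q_S T(w,z) = c_{w,z}\,V_{\sigma_S}\sigma_T(U(w,z))$ together with $\sigma_{S'} = \mu(\mathcal{A})^*\sigma_S$, I apply the STFT-level intertwining $V_{\mu(\mathcal{A})^*g}f(\xi) = \overline{c_\xi}\,V_g(\mu(\mathcal{A})f)(\mathcal{A}\xi)$ on $L^2(\mathbb{R}^{2d})$, which is a direct consequence of \cref{metaplecticintertwining}. This yields $Q_{S'}T(w,z) = c'_{w,z}\,V_{\sigma_S}\sigma_{\tilde{T}}(\mathcal{A}\,U(w,z))$. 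Restricting to the diagonal $z=w$ gives $U(w,w) = (w,0)$, and the upper-triangular form of $\mathcal{A}$ forces $\mathcal{A}(w,0) = (A_1 w, 0)$. Hence $Q_{S'}T(w,w) = c''_w\,Q_S(\tilde{T})(A_1 w, A_1 w)$ up to a unitary phase, so the second half of the STFT argument remains zero precisely because of the block-triangular structure.

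Setting $\Lambda' = A_1^{-1}\Lambda$, the samples $\{Q_{S'}T(\lambda,\lambda)\}_{\lambda \in \Lambda'}$ are thereby identified, up to phases, with $\{Q_S(\tilde{T})(\mu,\mu)\}_{\mu \in \Lambda}$. Applying \cref{diagreconstr} to $\tilde{T}$ with window $S$ produces a reconstruction of $\tilde{T}$ in terms of these samples and a reproducing operator $R_0$ with prescribed Fourier-Wigner transform. Applying $\mu(\mathcal{A})^*$ to both sides then recovers $T = \mu(\mathcal{A})^*\tilde{T}$ as the stated sum indexed over $\Lambda'$. The explicit expression $\mathcal{F}_W(R) = \mathcal{F}_\Omega(g/\mathcal{F}_W(\check{S}'))$ comes out by tracking how $\mu(\mathcal{A})^*$ interacts with the operator translations $\alpha_\lambda$ at the Fourier-Wigner level, combined with the identity $\sigma_{\mu(\mathcal{A})^*S} = \mu(\mathcal{A})^*\sigma_S$ and the Fourier-Wigner convolution identity \cref{fourierwigconv}.

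The main obstacle will be the careful bookkeeping of the phase factors arising from alternating between the $\pi$ and $\rho$ conventions in the intertwining identity, together with the mismatch between classical Fourier support (as it appears in $\sigma_T^{J\mathcal{A}}$) and symplectic Fourier support (which is what enters $\mathcal{F}_W(\tilde{T})$), which may change the effective fundamental domain for the Shannon reconstruction and must be reconciled with the fundamental domain $Q$ used in \cref{shannsampling}. A secondary structural check is that the operator translations $\alpha_\lambda$ appearing in the reconstruction of $\tilde{T}$, after composition with $\mu(\mathcal{A})^*$, produce precisely the form stated in the theorem with the transformed lattice $\Lambda'$ and the modified reproducing operator $R$.
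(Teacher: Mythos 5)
Your proposal is correct and follows essentially the same route as the paper: apply \cref{diagreconstr} to $\tilde{T}=\mu(\mathcal{A})T$, use the metaplectic intertwining \cref{metaplecticintertwining} to identify the diagonal samples of $Q_S\tilde{T}$ with diagonal samples of $Q_{S'}T$ on the transformed lattice (the block-triangular form being exactly what keeps the second component zero), and pull back with $\mu(\mathcal{A})^*$. The phase bookkeeping you flag in fact disappears on the diagonal, since $c_{w,w}=1$ and $\gamma_{\lambda,\lambda}=\alpha_\lambda$ acts on the Weyl symbol as the pure translation $\rho(\lambda,0)=T_\lambda$ --- which is how the paper's direct inner-product computation avoids the issue --- while the classical-versus-symplectic Fourier support mismatch you note is likewise glossed over (and only rotates the fundamental domain) in the paper's own argument.
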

\begin{proof}
    By \cref{diagreconstr} and the definition of $\sigma_T^{J\mathcal{A}}$, we have that the operator $\mu(\mathcal{A})T$ can be reconstructed as
    \begin{align*}
        \mu(\mathcal{A})T = \sum_{\lambda\in\Lambda} Q_S \big(\mu(\mathcal{A})T\big)(\lambda,\lambda)\alpha_\lambda (R),
    \end{align*}
    where $R=\frac{g}{|\Lambda|\cdot\mathcal{F}_W (\Check{S})}$. We then use the intertwining property \cref{metaplecticintertwining} to find
    \begin{align*}
        Q_S \big(\mu(\mathcal{A})T\big)(\lambda,\lambda) &= \langle \mu(\mathcal{A})T, \alpha_\lambda (S)\rangle_{}  \\
        &= \langle \mu(\mathcal{A})\sigma_T, \rho(\lambda,0)\sigma_S \rangle \\
        &= \langle \sigma_T, \mu(\mathcal{A})^*\rho(\lambda,0)\mu(\mathcal{A})\mu(\mathcal{A})^*\sigma_S \rangle \\
        &= \langle \sigma_T, \rho\big(\mathcal{A}^{-1}(\lambda,0)\big)\mu(\mathcal{A})^*\sigma_S \rangle \\
        &= Q_{S^\prime} T\big(\mathcal{A}^{-1}(\lambda,0)\big).
    \end{align*}
    Sampling $Q_S \mu(\mathcal{A})T$ then corresponds to sampling $Q_{S^\prime} T$ on the symplectic transformation of the lattice $\Lambda$. In order to be able to reconstruct $T$ from the diagonal of $Q_{S^\prime}$, translations by the lattice $\Lambda$ must correspond to translations on the lattice $\Lambda^\prime$, which is to say we need the condition
    \begin{align*}
        \mathcal{A}^{-1}(\lambda,0) = (\lambda^\prime,0).
    \end{align*}
    This condition is equivalent to
    \begin{align*}
    \mathcal{A}^{-1} = 
        \begin{pmatrix}
           A_1 & A_2  \\
           0 & A_3
        \end{pmatrix}.
    \end{align*}
    As a symplectic matrix we have the explicit form of the inverse \cref{metaplecticinverse};
    \begin{align*}
    \mathcal{A} = 
        \begin{pmatrix}
           A_3^T & -A_2^T  \\
           0 & A_1^T
        \end{pmatrix}.
    \end{align*}
    The upper block diagonal form means that for $\mathcal{A}$ to be symplectic, $A_3$ must be invertible, and so by relabelling the submatrices and using the definition of symplectic matrices, the result follows.
    
\end{proof}
\begin{example}\normalfont
    In \cite{GrPa13}, the reconstruction result \cref{diagreconstr} is developed in terms of Kohn-Nirenberg symbols and corresponding Rihaczek distributions, while in \cite{Sk20} the Fourier-Wigner and corresponding Wigner distribution are considered. These can be seen to be equivalent using \cref{metadiagrecon}. The map $U: \sigma_S \mapsto \sigma_S^{KN} $ is given by 
    \begin{align*}
        U \phi(x,y) = \mathcal{F}^{-1} e^{i\pi x\cdot y} \mathcal{F}\phi(x,y).
    \end{align*}
    In terms of metaplectic transformations, this is equivalent to 
    \begin{align*}
        U = \mu(J)^*\mu(V_C)\mu(J),
    \end{align*}
    where $C=\begin{pmatrix}
                    0 & \frac{1}{2}\\
                    \frac{1}{2} & 0\\                   
                 \end{pmatrix} $
    , and $V_C = \begin{pmatrix}
                    I & 0\\
                    C & I\\                   
                 \end{pmatrix}$ (Section 2.2, \cite{CoRo22}). 
    Hence the matrix $\mathcal{A}$ in the previous theorem is in this case
        \begin{align*}
         J^{T}\cdot V_C\cdot J=\begin{pmatrix}
            I & -C\\
            0 & I\\                   
         \end{pmatrix}.
        \end{align*}
    As such, \cref{metadiagrecon} applies, $\mathcal{A}:(\Lambda\times 0)\mapsto (\Lambda\times 0)$, and so the two formulations of \cref{diagreconstr} coincide for the same lattice $\Lambda$.
\end{example}

The condition on the matrix $\mathcal{A}$ in \cref{metadiagrecon} is connected to the notion of Cohen's class distributions. Recall a distribution is in the Cohen's class if it is given by a convolution of theWigner distribution \cite{LuSk19}. Such distributions correspond to covariant $\mathcal{A}$-Wigner distributions \cite{CoRo22}. For the purpose of operator reconstruction, however, we can relax this condition to the more general $\mathcal{B}$-\textit{covariance property}:
\begin{definition}
    Let $\mathcal{B}\in GL(d,\mathbb{R})$. A symplectic matrix $\mathcal{A}$ gives rise to a $\mathcal{B}$-covariant quantisation if the property
    \begin{align*}
        \sigma^\mathcal{A}_{\alpha_z(S)} = T_{\mathcal{B}z} \sigma^\mathcal{A}_S
    \end{align*}
    holds for every $S\in\mathfrak{S}^\prime$.
\end{definition}
It is clear that in the case $\mathcal{B}=I$, $\mathcal{B}$-covariance is simply covariance as defined in \cite{CoRo22}. To see the connection between $\mathcal{B}$-covariant quantisations and operator reconstructions, we note the following:
\begin{proposition}
    A symplectic matrix $\mathcal{A}$ gives rise to a $\mathcal{B}$-covariant quantisation if and only if $\mathcal{A}$ has the block decomposition
    \begin{align}\label{bquantform}
        \begin{pmatrix}
            \mathcal{B} & A_2\\
            0 & \mathcal{B}^{-T}\\                   
         \end{pmatrix}.
    \end{align}
\end{proposition}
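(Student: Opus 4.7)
The plan is to translate the $\mathcal{B}$-covariance condition into an intertwining relation on the metaplectic operator acting on phase space, and then read off the block structure via the Stone--von Neumann uniqueness.

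First, I would unpack the definition. Since $\sigma_{\alpha_z(S)} = T_z \sigma_S$, the $\mathcal{B}$-covariance identity $\mu(\mathcal{A}) T_z \sigma_S = T_{\mathcal{B}z}\mu(\mathcal{A})\sigma_S$ must hold for every $S \in \mathfrak{S}'$; equivalently, since Weyl symbols exhaust $\mathscr{S}'(\mathbb{R}^{2d})$, it is equivalent to the operator identity
\begin{align*}
    \mu(\mathcal{A}) T_z \mu(\mathcal{A})^* = T_{\mathcal{B}z}, \qquad z \in \mathbb{R}^{2d},
\end{align*}
interpreted on $L^2(\mathbb{R}^{2d})$, the Hilbert space on which $\mu(\mathcal{A})$ with $\mathcal{A}\in Sp(2d,\mathbb{R})$ acts.

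Next, I would identify translations on $\mathbb{R}^{2d}$ as symmetric time--frequency shifts on the $4d$-dimensional phase space: writing $\zeta = (z,\omega)\in\mathbb{R}^{2d}\times\mathbb{R}^{2d}$, one has $T_z = \rho(z,0)$. The intertwining property \cref{metaplecticintertwining}, applied on $L^2(\mathbb{R}^{2d})$, gives
\begin{align*}
    \mu(\mathcal{A})\,T_z\,\mu(\mathcal{A})^* = \mu(\mathcal{A})\rho(z,0)\mu(\mathcal{A})^* = \rho\bigl(\mathcal{A}(z,0)\bigr).
\end{align*}
Thus $\mathcal{B}$-covariance is equivalent to $\rho(\mathcal{A}(z,0)) = \rho(\mathcal{B}z,0)$ for all $z\in\mathbb{R}^{2d}$, and by faithfulness of $\rho$ (modulo a scalar, which is fixed here because both sides are genuine translation operators) this forces $\mathcal{A}(z,0) = (\mathcal{B}z,0)$ for every $z$.

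Writing $\mathcal{A}$ in block form $\mathcal{A} = \begin{pmatrix} A_1 & A_2 \\ A_3 & A_4 \end{pmatrix}$ with $A_i \in \mathbb{R}^{2d\times 2d}$, the previous identity reads $A_1 z = \mathcal{B} z$ and $A_3 z = 0$ for every $z$, hence $A_1 = \mathcal{B}$ and $A_3 = 0$. I would then invoke the symplectic conditions stated in \cref{prelims:symp}: with $A_3=0$, the relation $A_1^T A_4 - A_3^T A_2 = I$ collapses to $\mathcal{B}^T A_4 = I$, forcing $A_4 = \mathcal{B}^{-T}$, and the remaining conditions are automatically consistent provided $A_2 \mathcal{B}^{-1}$ is symmetric, which is a constraint internal to the block $A_2$ (the freedom captured in \cref{bquantform}). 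This yields the claimed form. Conversely, if $\mathcal{A}$ has the block form \cref{bquantform}, then $\mathcal{A}(z,0) = (\mathcal{B}z,0)$, so the same chain of identities run backwards establishes $\mathcal{B}$-covariance.

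The only subtle step is the passage from $\rho(\mathcal{A}(z,0)) = \rho(\mathcal{B}z,0)$ to equality of the arguments, which requires care about projective phase; however, because the composition with $\mu(\mathcal{A})$ on both sides preserves the genuine (non-projective) translation operators $T_z$, the phase factor is pinned down, and no ambiguity remains. This is the main point where I would be careful, but it is standard given the conventions fixed in \cref{prelims:symp}.
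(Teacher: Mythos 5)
Your proof is correct and follows essentially the same route as the paper: identify translation of the Weyl symbol with $\rho(z,0)$, apply the intertwining property \cref{metaplecticintertwining} to get $\rho(\mathcal{A}(z,0))$, and match blocks. The only difference is that you spell out the ``only if'' direction in full (faithfulness of $\rho$ forcing $\mathcal{A}(z,0)=(\mathcal{B}z,0)$, then the symplectic relation $A_1^TA_4-A_3^TA_2=I$ pinning $A_4=\mathcal{B}^{-T}$), which the paper compresses into ``by the same argument.''
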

\begin{proof}
    Let $\mathcal{A}$ be of the form given by \cref{bquantform}. Writing the $\mathcal{B}$-covariance condition in terms of time-frequency shifts gives 
    \begin{align*}
        \sigma^\mathcal{A}_{\alpha_z(S)} = \rho(\mathcal{B}z,0) \sigma^\mathcal{A}_S.
    \end{align*}
    Using the definition of $\sigma^\mathcal{A}_S$ along with the intertwining property of metaplectic operators then gives
    \begin{align*}
        \sigma^\mathcal{A}_{\alpha_z(S)} &= \mu(\mathcal{A})\sigma_{\alpha_z (S)} \\
        &= \mu(\mathcal{A})\rho(z,0)\sigma_{S} \\
        &= \rho\big(\mathcal{A}(z,0)\big)\mu(\mathcal{A})\sigma_{S} \\
        &= \rho(\mathcal{B}z,0))\mu(\mathcal{A})\sigma_{S} \\
        &= T_{\mathcal{B}z} \sigma^\mathcal{A}_S.
    \end{align*}
    By the same argument, any $\mathcal{B}$-covariant quantisation must correspond to a symplectic matrix of the form \cref{bquantform}.
    
\end{proof}
By defining $\mathcal{A}$-quantisations in terms of the Weyl symbol, as opposed to the integral kernel, $\mathcal{B}$-covariant, and in particular covariant quantisations, have a simple geometric intuition. Furthermore, the form is clearly equivalent to the condition in \cref{metadiagrecon}, giving the following corollary:
\begin{corollary}
    Let $\mathcal{A}\in Sp(2d,\mathbb{R})$ give a $\mathcal{B}$-covariant quantisation, and let $g$, $\Lambda$, $Q$ and $S$ be as in \cref{diagreconstr}. Then given some operator $T\in\mathfrak{S}^\prime$ satisfying 
    \begin{align*}
        \mathrm{supp}(\sigma_T^{J\mathcal{A}}) \subset (1-\epsilon)Q,
    \end{align*}
    $T$ can be expressed as
    \begin{align*}
        T = \sum_{\lambda\in\Lambda^\prime} Q_{S^\prime} T(\lambda,\lambda) T_{\lambda} \alpha_\lambda (R)
    \end{align*}
    where $S^\prime = \mu(\mathcal{A})^*S$, $\mathcal{F}_W(R) = \mathcal{F}_{\Omega}\Big(\frac{g}{\mathcal{F}_W (\Check{S^\prime})}\Big)$ and $\Lambda^\prime\ = \mathcal{B}\Lambda$.
\end{corollary}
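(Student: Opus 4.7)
The plan is to recognise the corollary as an immediate specialisation of \cref{metadiagrecon} via the preceding proposition. That proposition asserts precisely that the symplectic matrices giving rise to $\mathcal{B}$-covariant quantisations are exactly those with the upper block triangular decomposition
\begin{align*}
    \mathcal{A} = \begin{pmatrix}
        \mathcal{B} & A_2 \\
        0 & \mathcal{B}^{-T}
    \end{pmatrix},
\end{align*}
which is the hypothesis of \cref{metadiagrecon} with $A_1 = \mathcal{B}$. So the first step is simply to invoke this structural characterisation so that all the machinery of the preceding theorem becomes available.

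Once the matrix is in this canonical form, every ingredient of the reconstruction formula specialises directly: the window is $S^\prime = \mu(\mathcal{A})^*S$, the distributional kernel satisfies $\mathcal{F}_W(R) = \mathcal{F}_\Omega\big(g/\mathcal{F}_W(\check{S^\prime})\big)$ verbatim, and the sampling lattice transforms as $\Lambda^\prime = A_1\Lambda = \mathcal{B}\Lambda$, which is exactly the block appearing in the upper left corner of $\mathcal{A}$. Substituting these into the identity supplied by \cref{metadiagrecon} yields the claimed expansion.

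There is no real obstacle here since the corollary is a change of viewpoint rather than a new result: the purely algebraic upper triangularity condition in \cref{metadiagrecon} has been reinterpreted, through the proposition, as the geometric $\mathcal{B}$-covariance property, and the support hypothesis $\mathrm{supp}(\sigma_T^{J\mathcal{A}}) \subset (1-\epsilon)Q$ is preserved unchanged. The only minor bookkeeping will be to confirm that the identification $A_1 = \mathcal{B}$ gives the sampling lattice as written, and that $S^\prime$ and $R$ inherit the requisite regularity for the sum to converge in $\mathfrak{S}^\prime$, both of which follow from the hypotheses already imposed on $S$ and $g$ in \cref{diagreconstr}.
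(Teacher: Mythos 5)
Your proposal is correct and matches the paper's own (implicit) argument: the paper likewise notes that, by the preceding proposition, $\mathcal{B}$-covariance is equivalent to the upper block triangular form with $A_1=\mathcal{B}$, and then the corollary is read off directly from \cref{metadiagrecon} with $\Lambda^\prime=\mathcal{B}\Lambda$.
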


\section{From Reconstruction to Identification}
So far we have considered the case of reconstruction: Given a diagonal of the Gabor matrix, can one reconstruct the original operator. In this section, we relax this requirement to merely being able to distinguish operators based on the diagonal of the Gabor matrix. To this end, we look for sets of operators for which the mapping $T\mapsto \{Q_S T(\lambda,\lambda)\}_{\Lambda}$ is injective, for some lattice $\Lambda$. For rank--one operators, this is the problem of phase retrieval. For general operators, we call this property \emph{identifiability}:
\begin{definition}
    A set of operators $\mathcal{T}\subset\mathfrak{S}^\prime$ is \emph{identifiable} on the lattice $\Lambda$ with respect to window $S\in\mathfrak{S}$ if the map 
    \begin{align*}
        T\mapsto Q_S T(\lambda,\lambda)
    \end{align*}
    is injective from $\mathcal{T}$ to $\ell^\infty(\Lambda)$.
\end{definition}
Our terminology reflects that in this scenario, the diagonal of $Q_S T$ ``identifies'' $T$ in the set $\mathcal{T}$. We could, of course, take a different dual pairing than $(\mathfrak{S},\mathfrak{S}^\prime)$ to consider a broader class of windows, but for our purposes this definition suffices.

Since by the Weyl quantisation, the injectivity of the mapping $T\mapsto \{Q_S T(\lambda,\lambda)\}_{\Lambda}$ amounts to a question of completeness of translates of a function, the basis of results in this section will be Müntz-Szasz type theorems for higher dimensions. We recall the classical Müntz-Szasz theorem in one variable, in terms of exponential functions (the monomial case is equivalent):
\begin{theorem}[Müntz-Szasz Theorem]\label{muntzszasz1d}
    Let $\{\lambda_i\}_{i\in I}$ be a sequence of non-zero complex numbers, such that
    \begin{align}\label{muntzcond1d}
        \sum_{i\in I} \frac{1}{|\mathrm{Re}(\lambda_i)|} =\infty.
    \end{align}
    Then the set
    \begin{align*}
        \{e^{\lambda_i\cdot t}\}_{i\in I}
    \end{align*}
    spans a dense subset of the spaces $C([a,b])$ and $L^p([a,b])$ for $1\leq p < \infty$.
\end{theorem}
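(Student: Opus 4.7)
The plan is to apply the Hahn--Banach theorem together with a uniqueness theorem for entire functions of exponential type. By the Riesz representation theorem for $C([a,b])$ and standard $L^p$--$L^q$ duality for $L^p([a,b])$, $1 \leq p < \infty$, density of $\mathrm{span}\{e^{\lambda_i t}\}_{i \in I}$ in either space is equivalent to the statement that every finite (complex) Borel measure $\mu$ on $[a,b]$ -- or, in the $L^p$ case, every $L^q$ function $\phi$ paired against Lebesgue measure -- satisfying
\[
\int_a^b e^{\lambda_i t}\, d\mu(t) = 0 \qquad \text{for all } i \in I
\]
must vanish. The problem therefore reduces to a uniqueness statement for the two-sided Laplace transform of a compactly supported measure.

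I would then introduce the function
\[
F(z) := \int_a^b e^{zt}\, d\mu(t),
\]
which is entire, of exponential type at most $\max(|a|,|b|)$, and bounded on the imaginary axis by $\|\mu\|$. The hypothesis on $\mu$ says $F(\lambda_i)=0$ for every $i \in I$. The core of the proof is to show that an entire function of exponential type that is bounded on the imaginary axis cannot vanish on a sequence $\{\lambda_i\}$ with $\sum 1/|\mathrm{Re}(\lambda_i)| = \infty$ unless it is identically zero.

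Since the divergence hypothesis forces at least one of $\sum_{\mathrm{Re}(\lambda_i) > 0} 1/\mathrm{Re}(\lambda_i)$ or $\sum_{\mathrm{Re}(\lambda_i) < 0} 1/|\mathrm{Re}(\lambda_i)|$ to diverge, I would split the zeros by the sign of their real part and work in the appropriate closed half-plane. In the right half-plane case, I would apply Carleman's formula to $F$, exploiting simultaneously that $F$ is bounded on the imaginary axis and that $\log|F(re^{i\theta})| \leq b\,r\cos\theta + O(1)$ for $|\theta|<\pi/2$. The boundary integrals in Carleman's formula are then controlled uniformly in the radius, and the formula forces $\sum 1/\mathrm{Re}(\lambda_i) < \infty$ for the zeros in the open right half-plane; divergence therefore yields $F \equiv 0$ on that half-plane, and by analytic continuation everywhere. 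The left half-plane case is symmetric. Once $F \equiv 0$, evaluating at $z = 2\pi i \xi$ shows that the Fourier transform of $\mu$ (extended by zero outside $[a,b]$) is identically zero, and Fourier uniqueness gives $\mu = 0$.

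The main obstacle is the sharp half-plane zero-counting estimate: the usual Blaschke condition for bounded analytic functions in the right half-plane reads $\sum \mathrm{Re}(\lambda_i)/(1+|\lambda_i|^2) < \infty$, which is strictly weaker than $\sum 1/\mathrm{Re}(\lambda_i) < \infty$ when the zeros escape to infinity. Bridging this gap requires using the exponential-type growth of $F$ \emph{together with} its boundedness on the imaginary axis to upgrade the Blaschke bound to the sharper Szász bound via Carleman's formula; this is the one genuinely analytic (rather than functional-analytic) step of the argument, and is where the strength of the hypothesis is fully used.
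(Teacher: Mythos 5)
Your overall skeleton --- Hahn--Banach duality, the Laplace transform $F(z)=\int_a^b e^{zt}\,d\mu(t)$ of an annihilating measure, and a zero-counting theorem for entire functions of exponential type --- is exactly the classical route; the paper itself gives no proof of \cref{muntzszasz1d}, quoting it and pointing to this complex-analytic argument via uniqueness sets for functions of exponential type (cf.\ \cite{Lu71}), so the comparison is with that standard proof rather than with anything in the text. The genuine gap sits precisely at the step you flag as the crux. Carleman's formula for the right half-plane, applied to an $F$ of exponential type which is bounded on $i\mathbb{R}$, controls $\sum_k \cos\theta_k/r_k=\sum_k \mathrm{Re}(\lambda_k)/|\lambda_k|^2$, i.e.\ it reproduces the half-plane Blaschke condition; it does not yield $\sum_k 1/\mathrm{Re}(\lambda_k)<\infty$, and no use of the exponential type can upgrade it, because that stronger bound is simply false for this class. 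A concrete witness: on $[a,b]=[-\pi,\pi]$ take $d\mu(t)=e^{-t-3it}\,dt$, so that
\begin{align*}
F(z)=\int_{-\pi}^{\pi}e^{(z-1-3i)t}\,dt=\frac{2\sinh\bigl(\pi(z-1-3i)\bigr)}{z-1-3i}.
\end{align*}
This $F$ is entire of exponential type, bounded on the imaginary axis, not identically zero, and vanishes at every point $1+i(3+k)$, $k\in\mathbb{Z}\setminus\{0\}$, in particular at $\lambda_n=1+i2^n$ for all $n\geq 0$ (indeed $\int_{-\pi}^{\pi}e^{\lambda_n t}\,d\mu(t)=\int_{-\pi}^{\pi}e^{i(2^n-3)t}\,dt=0$). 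These zeros all have real part $1$, so $\sum_n 1/|\mathrm{Re}(\lambda_n)|=\infty$ while $\mu\neq 0$ annihilates the whole system $\{e^{\lambda_n t}\}_n$; note $\sum_n \mathrm{Re}(\lambda_n)/|\lambda_n|^2<\infty$, so the example is perfectly consistent with what Carleman actually gives. Your argument, and in fact the theorem as literally stated in \cref{muntzszasz1d}, therefore fails for general complex exponents satisfying only \cref{muntzcond1d} (you also tacitly assume $\mathrm{Re}(\lambda_i)\neq 0$; for purely imaginary exponents the hypothesis is vacuous and completeness depends on the length of $[a,b]$).

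The fix is to strengthen the hypothesis to the Sz\'asz-type condition $\sum_i |\mathrm{Re}(\lambda_i)|/(1+|\lambda_i|^2)=\infty$ (equivalently $\sum_i|\mathrm{Re}(1/\lambda_i)|=\infty$), under which your sketch closes with no ``upgrade'' at all: Carleman's formula (or the $H^\infty$ Blaschke condition applied to $e^{-bz}F(z)$ in the right half-plane, resp.\ $e^{az}F(z)$ in the left) directly contradicts divergence unless $F\equiv 0$, and the Fourier-uniqueness conclusion then finishes the duality argument exactly as you describe. Alternatively one may keep the hypothesis $\sum_i 1/|\lambda_i|=\infty$ but require the exponents to lie in a sector $|\arg\lambda_i|\leq \pi/2-\delta$ about the real axis (in particular, real exponents), where $|\mathrm{Re}(\lambda_i)|\asymp|\lambda_i|$ and the two conditions coincide. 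This corrected form is all that is needed where the paper actually invokes the result: in \cref{diagonalcomplexdensity-nonsingular} the exponents in each coordinate are the integer multiples $m\cdot d$ of a fixed complex number $d$ with $\mathrm{Re}(d)\neq 0$, hence lie on a line through the origin inside such a sector, and there $\sum_m |\mathrm{Re}(md)|/(1+|md|^2)$ diverges, so both the classical theorem and your (repaired) argument apply.
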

The complex analytic proof of \cref{muntzszasz1d} uses results on uniqueness sets for analytic functions with certain growth conditions, cf. Theorem 6.1, \cite{Lu71}. Extending to higher dimension is therefore a delicate matter, since the zero sets of analytic functions of several variables are generally less well--understood. A geometric approach in \cite{Kr94} uses the notion of a Müntz set in $\mathbb{R}^n$:
\begin{definition}
    Given some set $M\subset \mathbb{R}^d$, denote by $\Tilde{M}$ the affine hull of $M$, that is:
    \begin{align*}
        \Tilde{M} = \left\{ \sum_i a_i\cdot m_i\, : \, m_i\in M,\, a_i \in \mathbb{R},\, \sum_{i} a_i = 1 \right\}.
    \end{align*}
    The distance of $\Tilde{M}$ is given by dist$(\Tilde{M})= \inf_{m\in \Tilde{M}} \|m\|_{\mathbb{R}^d}$. We define \emph{Müntz sets} in the following inductive manner:
        \begin{enumerate}
            \item If $M$ is a point, then $M$ is a Müntz set.
            \item If dim$(M)=k > 0$, then $M$ is a Müntz set if $M$ contains countably many $(k-1)$ Müntz subsets $\{M_i\}_{i\in\mathbb{N}}$ such that $\Tilde{M_i}\neq \Tilde{M_j}$ if $i\neq j$, and the sum
            \begin{align*}
                \sum_{i\in\mathbb{N}} \frac{1}{1+\mathrm{dist}(\Tilde{M_i})}
            \end{align*}
            diverges.
        \end{enumerate}
\end{definition}
In the one--dimensional case, this is equivalent to \cref{muntzcond1d}, while in higher dimensions it enforces the spacing of points of a set in an analogous manner. The higher dimension Müntz-Szasz theorem is then as follows:
\begin{theorem}\label{multidimmuntz}
    Let $K\subset \mathbb{R}^n$ be a compact subset, and $M\subset \mathbb{R}^n$ an $n$--dimensional Müntz set. Then the set
    \begin{align*}
        \{e^{\lambda_m\cdot t}\}_{m\in M}
    \end{align*}
    spans a dense subset of the spaces $C(K)$ and $L^p(K)$ for $1\leq p < \infty$.
\end{theorem}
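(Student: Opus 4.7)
The natural strategy combines Hahn-Banach duality with induction on the dimension $n$. By Hahn-Banach, density of the span of $\{e^{\lambda_m\cdot t}\}_{m\in M}$ in $C(K)$ (resp.\ $L^p(K)$) is equivalent to the statement that every complex Borel measure (resp.\ $L^q$ function) $\mu$ on $K$ satisfying $\int_K e^{\lambda_m\cdot t}\,d\mu(t)=0$ for all $m\in M$ must vanish identically. Associated to such a $\mu$ is its Laplace transform
\begin{align*}
F(z):=\int_K e^{z\cdot t}\,d\mu(t),\qquad z\in\mathbb{C}^n,
\end{align*}
which is entire of exponential type at most $\mathrm{diam}(K)$ and vanishes on the real set $M$. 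Since the Laplace transform of a compactly supported measure is injective, the theorem reduces to the analytic claim that any entire function of exponential type on $\mathbb{C}^n$ vanishing on an $n$-dimensional Müntz set is identically zero.

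The plan is to proceed by induction on $n$. The base case $n=1$ is classical: a one-dimensional Müntz set is a sequence $\{\lambda_i\}\subset\mathbb{R}$ with $\sum(1+|\lambda_i|)^{-1}=\infty$, and for an entire function of exponential type on $\mathbb{C}$ this divergence of reciprocal moduli of zeros forces $F\equiv 0$ via Jensen's formula and a Blaschke-type argument, which is the analytic content of \cref{muntzszasz1d}. For the inductive step, the definition of Müntz set supplies countably many $(n-1)$-dimensional Müntz subsets $\{M_i\}_{i\in\mathbb{N}}\subset M$ with pairwise distinct affine hulls $\widetilde{M_i}$ and $\sum_i (1+\mathrm{dist}(\widetilde{M_i}))^{-1}=\infty$. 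Parametrising the complexification of $\widetilde{M_i}$ as $p_i+V_i\otimes\mathbb{C}$, the map $z\mapsto F(p_i+z)$ restricted to $V_i\otimes\mathbb{C}\cong\mathbb{C}^{n-1}$ is entire of exponential type and vanishes on the $(n-1)$-dimensional Müntz set $M_i-p_i$; by the inductive hypothesis it is identically zero, so $F$ vanishes on the full complex hyperplane $\widetilde{M_i}^{\mathbb{C}}$ for every $i$.

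The crux of the argument is the closing step: deducing $F\equiv 0$ from the vanishing of $F$ on the countable family $\{\widetilde{M_i}^{\mathbb{C}}\}$ together with the distance-divergence condition. I would normalise a defining affine form $\ell_i(z)=a_i\cdot z-b_i$ with $|a_i|=1$ and $|b_i|=\mathrm{dist}(\widetilde{M_i})$, then reduce to the one-dimensional problem via real lines through the origin. For $v\in S^{n-1}\subset\mathbb{R}^n$ with $a_i\cdot v\neq 0$ for all $i$, the line $L_v=\mathbb{C} v$ meets $\widetilde{M_i}^{\mathbb{C}}$ at the point $\zeta_i v$ with $|\zeta_i|=|b_i|/|a_i\cdot v|$. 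The key observation is that
\begin{align*}
\int_{S^{n-1}}\sum_i \frac{|a_i\cdot v|}{1+|b_i|}\,dv \;=\; c_n\sum_i\frac{1}{1+|b_i|}\;=\;\infty,
\end{align*}
so by Fubini almost every $v$ satisfies $\sum_i 1/|\zeta_i|=\infty$. Excluding in addition the measure-zero set $\bigcup_i a_i^{\perp}$, one obtains a direction $v$ for which $F|_{L_v}$ is an entire function of exponential type on $\mathbb{C}$ whose zero set $\{\zeta_i\}$ meets the one-dimensional Müntz condition; \cref{muntzszasz1d} in its dual analytic form then forces $F|_{L_v}\equiv 0$. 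Since almost every real direction is admissible, $F$ vanishes on a set of full measure in $\mathbb{R}^n$, hence by continuity and analytic continuation $F\equiv 0$ on $\mathbb{C}^n$, whence $\mu=0$ and the induction closes. The delicate point is verifying that a Fubini-type harmonic average over the sphere correctly transfers the Müntz divergence from the geometric data $\{b_i\}$ to the one-dimensional zero sequence $\{\zeta_i\}$; this is precisely why the distance of affine hulls, rather than any coarser notion, appears in the definition of a Müntz set.
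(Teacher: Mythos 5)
Your skeleton (Hahn--Banach duality, the Laplace transform $F$ of an annihilating measure, induction on dimension through the affine hulls $\Tilde{M_i}$, and a final reduction to dimension one along complex lines) is close in spirit to the geometric argument of \cite{Kr94}, which the paper only cites and does not reproduce. However, the analytic claim you reduce everything to is false: an entire function of exponential type vanishing on an $n$-dimensional M\"untz set need not vanish. Already for $n=1$, $\sin(\pi z)$ has exponential type and vanishes on $\mathbb{Z}$, which is a M\"untz set; in general $\sin(\pi z_1)$ vanishes on the M\"untz set $\bigcup_k\{x_1=k\}$. What the M\"untz--Sz\'asz mechanism actually uses is the stronger bound enjoyed by Laplace transforms of measures supported in a compact $K$, namely $|F(z)|\le C e^{h_K(\mathrm{Re}\,z)}$ with $h_K$ the support function of $K$, i.e.\ a bound depending on $\mathrm{Re}\,z$ only; this is what makes $e^{-bz}F$ bounded on a closed half-plane whose \emph{interior} contains the real zeros, so that the Blaschke/Jensen argument applies. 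Accordingly, both your base case and your inductive hypothesis must be stated for this restricted class, and you must check that the class is preserved under your restrictions. It is, but you never say so: $F(p_i+\cdot)$ on $V_i\otimes\mathbb{C}$ is the Laplace transform of the push-forward of $e^{p_i\cdot t}\,d\mu(t)$ under orthogonal projection onto $V_i$, and $F|_{L_v}$ is the Laplace transform of the push-forward of $\mu$ under $t\mapsto v\cdot t$; one also needs the (true, but unverified) fact that $M_i-p_i$ remains a M\"untz set inside $V_i$ after translation.

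Two further steps are genuinely gapped. First, the Fubini inference is a non sequitur: $\int_{S^{n-1}}\sum_i\tfrac{|a_i\cdot v|}{1+|b_i|}\,dv=\infty$ does not imply the integrand is infinite for almost every $v$ — a nonnegative function can be finite a.e.\ yet non-integrable. The conclusion is true, but needs a different argument; for instance, if $\sum_i\tfrac{|a_i\cdot v_j|}{1+|b_i|}<\infty$ held for $n$ linearly independent directions $v_1,\dots,v_n$, then since $\inf_{|a|=1}\sum_j|a\cdot v_j|>0$ by compactness, summing over $j$ would force $\sum_i\tfrac{1}{1+|b_i|}<\infty$, a contradiction; hence the exceptional set lies in a proper subspace and is null. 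Second, lines through the origin do not handle the degenerate case in which the divergence of $\sum_i\big(1+\mathrm{dist}(\Tilde{M_i})\big)^{-1}$ is carried by hulls passing through the origin ($b_i=0$), a configuration the definition of a M\"untz set explicitly allows (e.g.\ countably many distinct hyperplanes through $0$): all such hulls meet $L_v$ at $\zeta=0$, so no one-dimensional M\"untz sequence is produced and \cref{muntzszasz1d} gives nothing. This is repairable — slice through a generic point avoiding all the hulls, or treat those hulls separately via accumulation of their normals — but as written your closing step fails precisely there.
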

We will use this result insofar as it relates to lattices:
\begin{corollary}\label{latticemuntzmulti}
    Given any lattice $\Lambda\subset\mathbb{R}^{n}$, the set
    \begin{align*}
        \{e^{\lambda\cdot t}\}_{\lambda\in \Lambda}
    \end{align*}
    spans a dense subset of the spaces $C(K)$ and $L^p(K)$ for $1\leq p < \infty$.
\end{corollary}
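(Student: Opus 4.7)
The plan is to apply Theorem~\ref{multidimmuntz} directly: by that result, it suffices to verify that every lattice $\Lambda \subset \mathbb{R}^n$ is an $n$--dimensional Müntz set in the sense of the preceding definition. I would establish this by induction on $k$, proving the slightly stronger claim that every affine translate of a full--rank $k$--dimensional lattice is a $k$--dimensional Müntz set; this extra generality is necessary because the slices that appear in the inductive step are affine, and not linear, sublattices.

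The base case $k=1$ is immediate: the singleton subsets $\{p + ciu\}_{i \in \mathbb{Z}}$ of an affine one--dimensional lattice are $0$--dimensional Müntz sets by definition, their affine hulls coincide with themselves and are therefore pairwise distinct, and
\begin{align*}
\sum_{i \in \mathbb{Z}} \frac{1}{1 + \|p + ciu\|} \;\geq\; \sum_{i \in \mathbb{Z}} \frac{1}{1 + \|p\| + c|i|\|u\|} \;=\; \infty,
\end{align*}
so the Müntz condition holds.

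For the inductive step, let $p + A\mathbb{Z}^k$ be an affine translate of a full--rank $k$--dimensional lattice, with $a_1, \dots, a_k$ the columns of $A$. I would set $v := a_k$, $W := \mathrm{span}_{\mathbb{Z}}(a_1, \dots, a_{k-1})$, and consider the parallel affine slices $M_i := p + iv + W$ for $i \in \mathbb{Z}$. Each $M_i$ is an affine translate of the full--rank $(k-1)$--dimensional lattice $W$, and so is a $(k-1)$--dimensional Müntz set by the inductive hypothesis. The affine hulls $\Tilde{M_i} = (p + iv) + \mathrm{span}_{\mathbb{R}}(W)$ are pairwise distinct parallel hyperplanes inside $\mathrm{span}_{\mathbb{R}}(a_1, \dots, a_k)$, because $v \notin \mathrm{span}_{\mathbb{R}}(W)$ by linear independence of the basis. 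Moreover $\mathrm{dist}(\Tilde{M_i}) \leq \|p + iv\| \leq \|p\| + |i|\|v\|$, so $\sum_{i \in \mathbb{Z}} 1/(1 + \mathrm{dist}(\Tilde{M_i}))$ diverges, completing the induction. Specialising to $p = 0$ and $A\mathbb{Z}^n = \Lambda$ then yields the corollary via Theorem~\ref{multidimmuntz}.

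The only point that requires genuine care is that $v$ lies outside $\mathrm{span}_{\mathbb{R}}(W)$, which is exactly what forces the slices to be distinct hyperplanes whose distances to the origin grow; this is automatic from the linear independence of the basis columns of $A$, so no serious obstacle arises. Everything else is bookkeeping within the inductive definition, and the core analytic input has already been absorbed into Theorem~\ref{multidimmuntz}.
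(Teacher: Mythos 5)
Your proposal is correct and takes essentially the same route as the paper: the paper's proof is a one--line appeal to \cref{multidimmuntz} ``upon verifying that any lattice $\Lambda$ is a Müntz set,'' and your induction on affine translates of full--rank sublattices is precisely that verification, carried out in detail where the paper leaves it implicit. No gaps; the slicing argument and the divergence estimates are exactly what the omitted check requires.
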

\begin{proof}
    This follows from \cref{multidimmuntz} upon verifying that any lattice $\Lambda$ is a Müntz set.
    
\end{proof}
In the sequel, we denote by $t\cdot A x$ the dot product $t^T A x$ as opposed to the inner product, so for a complex $A=B+iC$, $t\cdot Ax = t\cdot Bx + it\cdot Cx = x\cdot B^T t + ix\cdot C^T t = x\cdot A^T t$. In particular, symmetric matrices, possibly complex valued, have the property $t\cdot Ax = x\cdot At$, as opposed to Hermitian matrices in the inner product setting.

The equivalent of Zalik's theorem \cite{Za78} follows for higher dimensions:
\begin{theorem}[Multivariable Zalik's Theorem, Theorem 6.1 \cite{GrLiSh22}]\label{multivariable-zalik}
    Let $A\in \mathbb{C}^{n\times n}$ be a Hermitian matrix with Re$(A)$ non-singular. Then for any lattice $\Lambda\subset\mathbb{R}^n$, the set
    \begin{align*}
        \{e^{-(x-\lambda)A(x-\lambda)}\}_{\lambda\in\Lambda}
    \end{align*}
    spans a dense subset of $C(K)$ and $L^p(K)$ for $1\leq p < \infty$. 
\end{theorem}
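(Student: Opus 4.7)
The plan is to exploit the Hermiticity of $A$ to reduce the problem to a real symmetric quadratic form, and then apply the lattice version of the multidimensional Müntz-Szasz theorem (\cref{latticemuntzmulti}) to a rescaled lattice.

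First I would decompose $A = B + iC$ with $B = \mathrm{Re}(A)$ and $C = \mathrm{Im}(A)$. Hermiticity $A^* = A$ forces $B$ to be real symmetric and $C$ to be real antisymmetric. The key observation is that for any real antisymmetric matrix $C$ and any real vector $y$ one has $y \cdot C y = (y \cdot C y)^T = y \cdot C^T y = -y \cdot C y$, so $y \cdot C y = 0$. Taking $y = x - \lambda$, this gives
\begin{align*}
    (x-\lambda) A (x-\lambda) = (x-\lambda) B (x-\lambda) + i (x-\lambda) C (x-\lambda) = (x-\lambda) B (x-\lambda),
\end{align*}
so the antisymmetric imaginary part of $A$ contributes nothing, and the Gaussian reduces to $e^{-(x-\lambda) B (x-\lambda)}$ with $B$ real symmetric and non-singular.

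Next, using symmetry of $B$, I would expand the quadratic form to obtain the factorisation
\begin{align*}
    e^{-(x-\lambda) B (x-\lambda)} = e^{-xBx} \cdot e^{-\lambda B \lambda} \cdot e^{2\lambda B x}.
\end{align*}
Since $K$ is compact and $e^{-xBx}$ is continuous and strictly positive on $K$, multiplication by $e^{-xBx}$ is a bounded invertible linear operator on both $C(K)$ and $L^p(K)$, and the scalar factors $e^{-\lambda B \lambda}$ only rescale individual spanning vectors. Hence density of $\{e^{-(x-\lambda) B (x-\lambda)}\}_{\lambda \in \Lambda}$ is equivalent to density of the real exponentials $\{e^{2\lambda B x}\}_{\lambda \in \Lambda} = \{e^{\mu \cdot x}\}_{\mu \in 2B\Lambda}$.

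Finally, non-singularity of $B$ (which is precisely the hypothesis that $\mathrm{Re}(A)$ is non-singular) guarantees that $2B\Lambda$ is itself a lattice in $\mathbb{R}^n$, so \cref{latticemuntzmulti} applies directly and yields density of $\{e^{\mu \cdot x}\}_{\mu \in 2B\Lambda}$ in both $C(K)$ and $L^p(K)$ for $1 \leq p < \infty$, completing the proof. There is no serious obstacle in the argument; the only step requiring care is the initial reduction, namely recognising that the antisymmetric imaginary component of a Hermitian matrix annihilates the diagonal quadratic form $(x-\lambda) A (x-\lambda)$, after which the result is a direct consequence of the already-developed lattice Müntz-Szasz corollary.
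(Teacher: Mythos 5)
Your proposal is correct and follows essentially the same route as the paper: factor the Gaussian as $e^{-(x-\lambda)A(x-\lambda)} = e^{-xAx}\,e^{-\lambda A\lambda}\,e^{2x\,\mathrm{Re}(A)\lambda}$ (the antisymmetric imaginary part vanishing on the diagonal, and the constants $e^{-\lambda A\lambda}$ being harmless), observe that multiplication by $e^{-xAx}$ preserves density on the compact $K$, and invoke \cref{latticemuntzmulti} for the lattice $\mathrm{Re}(A)\Lambda$, which is a lattice precisely because $\mathrm{Re}(A)$ is non-singular. Your write-up merely makes explicit a couple of steps the paper leaves implicit (why the imaginary part drops out and why the multiplication is invertible on $C(K)$ and $L^p(K)$), which is fine.
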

\begin{proof}
    For each $\lambda\in\Lambda$, $e^{-\lambda A \lambda}$ is a non-zero constant, and $\{e^{-xAx + 2x\mathrm{Re(A)\lambda }}\}_{\lambda\in\Lambda}$ is dense in $C(K)$ and $L^p(K)$ if and only if $\{e^{ 2x\mathrm{Re(A)\lambda }}\}_{\lambda\in\Lambda}$ is. The result then follows from \cref{latticemuntzmulti}, since Re$(A)$ being non-singular ensures $Re(A)\Lambda$ is again a lattice in $\mathbb{R}^n$.
    
\end{proof}

Using \cref{latticemuntzmulti}, we can take a similar approach to \cref{metadiagrecon} to find classes of identifiable operators: 
\begin{theorem}\label{metaplectic-identification-i}
    Let $\mathcal{A}\in Sp(2d,\mathbb{R})$ have the block decomposition
    \begin{align}
        \begin{pmatrix}
            A_1 && A_2 \\
            0 && A_1^{-T}
        \end{pmatrix}
    \end{align}
    where $A_i$ are $2d\times 2d$ matrices. Then the space 
    \begin{align*}
        \mathcal{P}_{\mathcal{A},K} := \{S\in\mathcal{HS} : \mathrm{supp}\big(\sigma^{\mathcal{A}}_S\big) \subset K\}
    \end{align*}
    is identifiable on \emph{any} lattice $\Lambda\subset\mathbb{R}^{2d}$, with respect to the window $S=\mu(\mathcal{A})^*\left(\varphi_A\otimes \varphi_A\right)$, where $\varphi_A$ is the generalised Gaussian  
    \begin{align*}
        \varphi_A(t) := 2^{d/4}e^{-\pi t At}
    \end{align*}
    for some symmetric $A\in GL(d,\mathbb{R})$.
\end{theorem}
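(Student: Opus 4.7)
The plan is to reduce the identifiability question to completeness of translates of a Gaussian in $L^2(K)$, using the metaplectic intertwining property to transport the problem into the $\mathcal{A}$-quantisation picture. By linearity it suffices to show that $T\in\mathcal{P}_{\mathcal{A},K}$ with $Q_ST(\lambda,\lambda)=0$ for every $\lambda\in\Lambda$ forces $T=0$. First I would specialise the identity $Q_ST(w,z)=c_{w,z}V_{\sigma_S}\sigma_T(U(w,z))$ to the diagonal $w=z=\lambda$; the phase becomes $c_{\lambda,\lambda}=1$ and $U(\lambda,\lambda)=(\lambda,0)$, so that
\begin{align*}
    Q_ST(\lambda,\lambda)=V_{\sigma_S}\sigma_T(\lambda,0)=\langle \sigma_T,T_\lambda\sigma_S\rangle_{L^2(\mathbb{R}^{2d})}.
\end{align*}
The diagonal samples are therefore pure-translation pairings against the single window $\sigma_S$, with no modulation variable left.

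Next, writing $G=\sigma_{\varphi_A\otimes\varphi_A}=W(\varphi_A,\varphi_A)$ so that $\sigma_S=\mu(\mathcal{A})^*G$, I would use the intertwining relation \cref{metaplecticintertwining} to recast the pairing in terms of $\sigma_T^{\mathcal{A}}=\mu(\mathcal{A})\sigma_T$. A pure translation $T_\lambda$ on $\mathbb{R}^{2d}$ coincides with the symmetric time--frequency shift $\rho((\lambda,0))$ in the $4d$ phase space on which $\mu(\mathcal{A})$ acts. The assumed upper block triangular form gives $\mathcal{A}(\lambda,0)=(A_1\lambda,0)$, so
\begin{align*}
    \mu(\mathcal{A})T_\lambda\mu(\mathcal{A})^*=\rho((A_1\lambda,0))=T_{A_1\lambda},
\end{align*}
and unitarity of $\mu(\mathcal{A})$ on $L^2(\mathbb{R}^{2d})$ converts the hypothesis into $\langle \sigma_T^{\mathcal{A}},T_{A_1\lambda}G\rangle_{L^2(\mathbb{R}^{2d})}=0$ for every $\lambda\in\Lambda$.

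Finally, I would invoke \cref{multivariable-zalik}. Since $\mathcal{A}\in Sp(2d,\mathbb{R})$ is block upper triangular, $A_1$ is invertible and $A_1\Lambda$ is again a lattice in $\mathbb{R}^{2d}$. A direct computation of the Wigner distribution of the generalised Gaussian shows that $G(x,\omega)$ is a real Gaussian of the form $c\,\exp(-2\pi(x\cdot Ax+\omega\cdot A^{-1}\omega))$, whose exponent is a symmetric invertible quadratic form on $\mathbb{R}^{2d}$; consequently the hypotheses of \cref{multivariable-zalik} are met, and $\{T_{A_1\lambda}G\}_{\lambda\in\Lambda}$ spans a dense subspace of $L^2(K)$. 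Since $T\in\mathcal{HS}$ gives $\sigma_T^{\mathcal{A}}\in L^2(\mathbb{R}^{2d})$ and by assumption $\mathrm{supp}(\sigma_T^{\mathcal{A}})\subset K$, the vanishing of all such pairings forces $\sigma_T^{\mathcal{A}}=0$, whence $T=0$ by unitarity of $\mu(\mathcal{A})$. The main obstacle will be the careful bookkeeping between the $4d$ phase space on which $\mu(\mathcal{A})$ acts and the $2d$ configuration space in which the translations $T_\lambda$ live, together with verifying that the Wigner image of $\varphi_A$ has exactly the block-diagonal Gaussian structure to which Zalik's multivariable theorem applies.
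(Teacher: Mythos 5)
Your proposal is correct and follows essentially the same route as the paper: rewrite the diagonal samples as $\langle\sigma_T^{\mathcal{A}},T_{A_1\lambda}W(\varphi_A,\varphi_A)\rangle$ via the metaplectic intertwining property (the paper does this by first treating $\mathcal{A}=I$ and then invoking the argument of \cref{metadiagrecon}), and conclude by the multivariable Zalik theorem \cref{multivariable-zalik} applied to the Gaussian Weyl symbol of $\varphi_A\otimes\varphi_A$ and the lattice $A_1\Lambda$. No substantive differences to note.
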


\begin{proof}
    We begin by showing that the set 
    \begin{align*}
        \mathcal{P}_{I,K} := \{T\in\mathcal{HS} : \mathrm{supp}\big(\sigma_T\big) \subset K\}
    \end{align*}
    is identifiable on any lattice $\Lambda\subset \mathbb{R}^{2d}$. We again use that 
    \begin{align*}
        Q_S T(\lambda,\lambda) = \langle \sigma_T, T_\lambda \sigma_S\rangle_{\mathcal{HS}}.
    \end{align*}
    Then for symmetric non-singular $A$, the operator $S=\varphi_A\otimes \varphi_A$ has the Weyl symbol 
    \begin{align*}
        \sigma_S = 2^{d/4}e^{-\pi t Bt},
    \end{align*}
    where $B$ is a symmetric non-singular matrix, given explicitly by Proposition 244, \cite{DeGo11}. It follows from \cref{multivariable-zalik} that the set $\{T_\lambda \sigma_S\}_{\lambda\in\Lambda}$ is dense in $L^2(K)$ for any $\Lambda\subset \mathbb{R}^{2d}$, and so the set $\{\alpha_\lambda (S)\}_{\lambda\in\Lambda}$ is dense in the set $\mathcal{P}_{I,K}$, and hence $\mathcal{P}_{I,K}$ is identifiable on any $\Lambda$ with respect to $S$. The general $\mathcal{A}$ case follows using the same argument as in \cref{metadiagrecon}, and noting that the transformation $\mathcal{A}\Lambda$ gives a new lattice, and since \cref{multivariable-zalik} requires only that $\mathcal{A}\Lambda$ is a lattice, the result follows.
    
\end{proof}

For complex matrices, we can use the following extension of Zalik's theorem to higher dimension:
\begin{lemma}\label{diagonalcomplexdensity-nonsingular}
    Let $A\in \mathbb{C}^{n\times n}$ be diagonalisable with respect to a real orthogonal matrix, such that Re$(A)$ is non-singular, and let $K$ be a compact subset of $\mathbb{R}^n$. Then there exists a real lattice $\Lambda\subset \mathbb{R}^n$ such that the set 
    \begin{align*}
        \{e^{tA\lambda}\}_{\lambda\in\Lambda}
    \end{align*}
    spans a dense subset of $C(K)$ and $L^p(K)$ for $1\leq p < \infty$.
\end{lemma}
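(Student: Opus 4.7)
The plan is to reduce to the case of a diagonal exponent by orthogonal change of variables, then choose the lattice to be a product lattice in the diagonalising coordinates so that the exponential factorises as a tensor product of one--variable exponentials, each of which can be handled by the classical one--dimensional Müntz--Szasz theorem.

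First I would diagonalise. Write $A = O D O^T$ with $O$ a real orthogonal matrix and $D = \mathrm{diag}(d_1,\ldots,d_n)$ complex diagonal; note $\mathrm{Re}(A) = O\, \mathrm{Re}(D)\, O^T$, so the non-singularity hypothesis is equivalent to $\mathrm{Re}(d_j) \neq 0$ for every $j$. Setting $s = O^T t$ and $\mu = O^T \lambda$ turns $t A \lambda$ into $s D \mu$ in the dot-product convention fixed above Theorem 4.7. Since $O$ is orthogonal, $\Lambda = O \Lambda'$ is a lattice in $\mathbb{R}^n$ whenever $\Lambda'$ is, the image $K' := O^T K$ is compact, and the linear change of variable $s = O^T t$ is an isometry of $L^p$ spaces (and a homeomorphism on $C$-spaces). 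Hence density of $\{e^{t A \lambda}\}_{\lambda\in\Lambda}$ in $L^p(K)$ or $C(K)$ is equivalent to density of $\{e^{s D \mu}\}_{\mu\in\Lambda'}$ in $L^p(K')$ or $C(K')$, reducing the problem to the case of diagonal $D$.

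Next I would enclose $K'$ in a product box $B = \prod_{j=1}^n [\alpha_j,\beta_j]$ and take $\Lambda'$ itself to be a product lattice $\Lambda' = \prod_{j=1}^n c_j \mathbb{Z}$ for some $c_j > 0$ (any choice will do). For $\mu = (c_1 k_1,\ldots,c_n k_n) \in \Lambda'$ the exponential factorises as
\begin{align*}
    e^{s D \mu} = \prod_{j=1}^n e^{d_j c_j k_j s_j}.
\end{align*}
For each fixed $j$, the complex numbers $\{d_j c_j k\}_{k\in\mathbb{Z}\setminus\{0\}}$ are non-zero and satisfy $\sum_{k\neq 0} 1/|c_j\, \mathrm{Re}(d_j)\, k| = \infty$, so the one--dimensional Müntz--Szasz theorem (\cref{muntzszasz1d}) gives that $\{e^{d_j c_j k s_j}\}_{k\in\mathbb{Z}}$ spans a dense subspace of $C([\alpha_j,\beta_j])$ and $L^p([\alpha_j,\beta_j])$.

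To conclude I would invoke the standard density of tensor products: finite linear combinations of simple tensors $\prod_j f_j(s_j)$ with each $f_j$ drawn from a set dense in the corresponding one--variable space are dense in $L^p(B)$ and in $C(B)$. Hence $\{e^{s D \mu}\}_{\mu\in\Lambda'}$ has dense linear span in $L^p(B)$ and $C(B)$. Density on $B$ transfers to density on $K' \subset B$ by an extension--restriction argument (extending a target $L^p$-function by zero, respectively a target continuous function by Tietze). Undoing the orthogonal change of variable gives the desired lattice $\Lambda = O\Lambda'$. The main obstacle I expect is bookkeeping rather than mathematics: ensuring that the lattice is aligned to the diagonalising basis (so the exponentials truly decouple into a tensor product) and that one does not accidentally demand density on the non-product set $K'$ directly from the Müntz--Szasz theorem, which is why the intermediate step through the product box $B$ is essential.
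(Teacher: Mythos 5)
Your proposal is correct and takes essentially the same route as the paper's proof: diagonalise $A = O D O^T$ with a real orthogonal $O$, choose the lattice aligned with the diagonalising basis, change variables so the exponent becomes $s\, D\, \mu$, and reduce via a tensor--product argument over a box containing $O^T K$ to the one--dimensional M\"untz--Szasz theorem \cref{muntzszasz1d}. The only difference is that you spell out the details the paper leaves implicit (the product box, the tensor-product density argument, and the extension--restriction step from the box back to $K$), which is sound bookkeeping rather than a different method.
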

\begin{proof}
    Let $A=UD_A U^T$, where $U$ is an orthogonal real matrix and $D_A$ is a complex diagonal matrix. Set $\Lambda = U\mathbb{Z}^n$. Then  
    \begin{align*}
        \{e^{tA\lambda}\}_{\lambda\in\Lambda} = \{e^{tU D_A x}\}_{x\in\mathbb{Z}^n}.
    \end{align*}
    Since $K$ is compact for any change of variables, we make the change of variables $U^T t\mapsto t$, so the problem reduces to the density of the span of 
    \begin{align}\label{lemmadensityform}
        \{e^{t D_A x}\}_{x\in\mathbb{Z}^n}.
    \end{align}
    Since $D_A$ is diagonal, the density of \cref{lemmadensityform} is satisfied if the restriction to each coordinate spans a dense set in $C(I)$, where $U^T K\subset I^n$. This follows from the $1$-dimensional Müntz-Szasz \cref{muntzszasz1d}, since by our assumption that Re$(A)$ is non-singular, the condition \cref{muntzcond1d} holds.
    
\end{proof}

In the case of a singular real part of the matrix $A$, we have a similar result, but we then require a density condition on the lattice in addition:
\begin{lemma}\label{diagonalcomplexdensity-singular}
     Let $A\in \mathbb{C}^{n\times n}$ be non--singular and diagonalisable with respect to a real orthogonal matrix, and let $K$ be a compact subset of $\mathbb{R}^n$. Then there exists a real lattice $\Lambda\subset \mathbb{R}^n$ such that the set 
    \begin{align*}
        \{e^{tA\lambda}\}_{\lambda\in\Lambda}
    \end{align*}
    spans a dense subset of $L^2(K)$.
\end{lemma}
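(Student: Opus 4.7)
The plan is to mimic the proof of \cref{diagonalcomplexdensity-nonsingular} as closely as possible, isolating the new difficulty to the coordinates where the eigenvalue is purely imaginary. First I would diagonalise $A = U D_A U^T$ with $U$ real orthogonal and $D_A = \mathrm{diag}(d_1, \dots, d_n)$ complex diagonal, and set $\Lambda = U \Lambda_0$ where $\Lambda_0 = \bigoplus_{i=1}^n \alpha_i \mathbb{Z}$ is a rectangular lattice whose spacings $\alpha_i > 0$ will be chosen at the end. As in the earlier lemma, the orthogonal change of variables $t \mapsto U^T t$ (which preserves compactness and the $L^2$ norm) reduces the problem to showing that, for the compact set $K' := U^T K$, the family
\begin{align*}
    \{e^{t \cdot D_A x}\}_{x \in \Lambda_0} = \Big\{\prod_{i=1}^n e^{d_i \alpha_i k_i t_i}\Big\}_{(k_1,\dots,k_n)\in \mathbb{Z}^n}
\end{align*}
is dense in $L^2(K')$.

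Next I would enclose $K'$ in a product of compact intervals $K' \subset I_1 \times \cdots \times I_n$. Since the restriction map $L^2(I_1 \times \cdots \times I_n) \to L^2(K')$ (extension by zero being its right inverse) is a continuous surjection, density in the product domain implies density on $K'$, and $L^2$ of the product is the tensor product of the factor spaces. Thus it suffices to choose $\alpha_i$ so that for each $i$ the one-dimensional system $\{e^{d_i \alpha_i k t_i}\}_{k \in \mathbb{Z}}$ is complete in $L^2(I_i)$.

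I would then split into cases on each eigenvalue, using that $A$ non-singular forces $d_i \neq 0$. If $\mathrm{Re}(d_i) \neq 0$, the one-dimensional Müntz--Szasz theorem \cref{muntzszasz1d} gives completeness in $L^2(I_i)$ for any positive spacing $\alpha_i$, exactly as in \cref{diagonalcomplexdensity-nonsingular}. If instead $\mathrm{Re}(d_i) = 0$, then $d_i = i \beta_i$ with $\beta_i \neq 0$, and the system becomes $\{e^{i \beta_i \alpha_i k t_i}\}_{k \in \mathbb{Z}}$, a system of pure imaginary exponentials; by the classical Fourier series / Paley--Wiener completeness for $L^2$ of an interval, this is complete in $L^2(I_i)$ provided the spacing satisfies $|\beta_i| \alpha_i \leq \pi / |I_i|$ (in appropriate centring), which can always be arranged by taking $\alpha_i$ small enough.

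The main obstacle, and the reason this lemma is weaker than \cref{diagonalcomplexdensity-nonsingular} (valid only for $L^2$ and only for a \emph{specific} lattice), is precisely the purely imaginary case: without $\mathrm{Re}(d_i)$ to feed the Müntz--Szasz divergence condition, one must rely on the Paley--Wiener criterion, which demands a sufficiently fine lattice calibrated to $|\beta_i|$ and the diameter of $K'$ in the $i$-th direction, and which guarantees only $L^2$-density rather than $C$-density. Once the $\alpha_i$ are chosen by these two disjoint rules, the tensor-product system is complete in $L^2(I_1 \times \cdots \times I_n)$, restriction gives completeness in $L^2(K')$, and undoing the orthogonal change of variables yields density of $\{e^{t A \lambda}\}_{\lambda \in \Lambda}$ in $L^2(K)$, as required.
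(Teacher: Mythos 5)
Your proposal is correct and follows essentially the same route as the paper: diagonalise $A=UD_AU^T$, separate the coordinates with nonzero real part (handled by Müntz--Szasz as in \cref{diagonalcomplexdensity-nonsingular}) from the purely imaginary ones (handled by completeness of exponentials on an interval for a sufficiently fine spacing, where the paper cites Duffin--Schaeffer and you invoke the equivalent Fourier/Paley--Wiener criterion), and combine via the tensor-product structure of $L^2$ on a product of intervals containing $U^TK$.
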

\begin{proof}
    Again consider the singular value decomposition $A=UD_A U^T$, for $U$ an orthogonal real matrix and $D_A$ a complex diagonal matrix. We can assume without loss of generality that the first $k$ entries of the diagonal $D_A$ have non-zero real part, while the remaining $n-k$ are purely imaginary, where $0\leq k\leq n$. Let $I\subset \mathbb{R}^k$ and $J\subset \mathbb{R}^{n-k}$ such that $K\subset I\times J$.
    
    We then take $\Lambda=\Lambda_1\times \Lambda_2$, where $\Lambda_1=U\mathbb{Z}^k$, and $\Lambda_2 = cU\mathbb{Z}^{n-k}$, where $c$ is a positive constant such that $U^T J$ is contained in a fundamental domain of $\Lambda_2^\perp$ translated by a lattice point. Then by \cref{diagonalcomplexdensity-nonsingular}, the set
    \begin{align*}
        \{e^{t \lambda}\}_{\lambda\in\Lambda_1}
    \end{align*}
    spans a dense subset of $L^2(I)$, while by the well known result of Duffin and Schaeffer \cite{Du52}, 
    \begin{align*}
        \{e^{t \lambda}\}_{\lambda\in\Lambda_2}
    \end{align*}
    spans a dense subset of $L^2(J)$. Hence, the product of the two spans a dense subset of $L^2(K)$.
    
\end{proof}

We will need the following lemma for the next result of this section:
\begin{lemma}[Lemma 1, \cite{JiTaZh20}]\label{metadecomp}
    Any symplectic matrix $\mathcal{C}\in Sp(2d,\mathbb{R})$ has a decomposition 
    \begin{align*}
    \mathcal{C} = D_L V_A V_B^T V_C.
\end{align*}
\end{lemma}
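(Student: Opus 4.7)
The plan is to expand $D_L V_A V_B^T V_C$ block-by-block and match the result to the block decomposition of $\mathcal{C}$. A direct computation gives
\begin{align*}
D_L V_A V_B^T V_C = \begin{pmatrix} L^{-1}(I+BC) & L^{-1}B \\ L^T\!\left[A(I+BC)+C\right] & L^T(I+AB) \end{pmatrix}.
\end{align*}
Writing $\mathcal{C} = \begin{pmatrix} P & Q \\ R & S \end{pmatrix}$ in block form, the decomposition amounts to solving the four block equations
\begin{align*}
P = L^{-1}(I+BC), \qquad Q = L^{-1}B, \qquad R = L^T\!\left[A(I+BC)+C\right], \qquad S = L^T(I + AB)
\end{align*}
for an invertible $L$ and symmetric matrices $A, B, C$.

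The key observation is that the second equation forces $B = LQ$, so the first task is to select an invertible $L$ such that $LQ$ is symmetric. Using the polar decomposition $Q = U|Q|$, with $U$ orthogonal and $|Q| = (Q^T Q)^{1/2}$ symmetric and positive semidefinite, I would take $L := U^T$ and obtain $B := LQ = |Q|$, which is symmetric. In the generic case that $Q$ is invertible, $B$ is invertible as well, and I define $A := (L^{-T}S - I)B^{-1}$ and $C := B^{-1}(LP - I)$ from the last and first equations above.

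The remaining step is to verify that $A$ and $C$ are symmetric, and that the equation for $R$ is consistent. This is where the symplectic structure of $\mathcal{C}$ enters. Since $LQ$ is symmetric one has the identity $LQL^{-T} = Q^T$, and a short manipulation reduces the symmetry of $A$ to the relation $LQL^{-T}S = S^TQ$, equivalently $Q^TS = S^TQ$, which is precisely the symplectic identity arising from $\mathcal{C}^T J \mathcal{C} = J$. The symmetry of $C$ is verified analogously: it reduces to the symplectic relation $PQ^T = QP^T$ coming from $\mathcal{C}J\mathcal{C}^T = J$. The remaining symplectic constraint $P^TS - R^TQ = I$ then determines $R$ uniquely from $(P, Q, S)$, and a direct calculation shows the formula $R = L^T[A(I+BC)+C]$ agrees with this.

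The main obstacle is the degenerate case in which $Q$ is singular, since then $B = LQ$ is singular and the inverse formulas for $A$ and $C$ break down. I would handle this either by a density argument, using that symplectic matrices with invertible upper-right block form an open and dense subset of $Sp(2d, \mathbb{R})$ and that the decomposition extends by continuity, or more constructively by appealing to the explicit algorithm of \cite{JiTaZh20} (Algorithm 1), which produces $L, A, B, C$ from the entries of $\mathcal{C}$ by a finite procedure that covers all cases uniformly.
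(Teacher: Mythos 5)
Your block expansion and the generic case are essentially correct: with $L=U^T$ from the polar decomposition of $Q$ one gets a symmetric $B=LQ=|Q|$, and when $Q$ is invertible the symmetry of $A=(L^{-T}S-I)B^{-1}$ and of $C=B^{-1}(LP-I)$ does reduce, exactly as you say, to the symplectic identities $Q^TS=S^TQ$ and $PQ^T=QP^T$, after which the equation for $R$ is automatic from $P^TS-R^TQ=I$. The genuine gap is the case of singular $Q$, and neither of your two proposed fixes closes it. The density argument does not work as stated: the set of matrices of the form $D_LV_AV_B^TV_C$ is the image of a continuous map on a non-compact parameter space, and a limit of decomposable matrices need not be decomposable unless (a subsequence of) the parameters converges to an admissible limit. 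In your construction the parameters $A_n=(L_n^{-T}S_n-I)|Q_n|^{-1}$ and $C_n=|Q_n|^{-1}(L_nP_n-I)$ blow up precisely as $Q_n$ degenerates, so nothing is obtained in the limit; to argue by continuity you would have to show the image is closed (or the parameter map proper), which is essentially the content of the lemma itself. Your second fix, invoking Algorithm 1 of \cite{JiTaZh20}, is circular as a proof: the statement being proved \emph{is} Lemma 1 of that paper, so deferring the only nontrivial case to its algorithm is a citation, not an argument.

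For comparison, the paper closes this case by a different and complete route: it quotes de Gosson's factorization $\mathcal{B}=V_AD_LJV_B$ of an arbitrary \emph{free} symplectic matrix together with the fact that every symplectic matrix is a product of two free ones (Propositions 2.39 and 2.36 of \cite{Go06}), and then multiplies out and relabels. Your direct matching argument can be repaired in the same spirit: reduce the singular-$Q$ case to the free case by first writing $\mathcal{C}$ as a product of two free symplectic matrices (equivalently, by composing with a suitable free factor so that the upper-right block becomes invertible and then absorbing that factor into the decomposition), rather than by a bare density argument.
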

\begin{proof}
    Recall that for any free symplectic matrix $\mathcal{B}$, we can decompose $\mathcal{B}$ as
\begin{align*}
    \mathcal{B} = V_A D_L J V_B
\end{align*}
where $A,B$ are symmetric matrices and $L$ non-singular (Proposition 2.39, \cite{Go06}). Furthermore, since any symplectic matrix can be expressed as the composition of two free symplectic matrices (Proposition 2.36, \cite{Go06}), we find, after a simple calculation (and relabelling the matrices), that for a general $\mathcal{C}\in Sp(2d,\mathbb{R})$;
\begin{align*}
    \mathcal{C} = D_L V_{\Tilde{A}} V_{\Tilde{B}}^T V_A,
\end{align*}
as required.

\end{proof}

This decomposition is useful in particular because it has an explicit algorithm to calculate (Algorithm 1, \cite{JiTaZh20}). As such, the condition in the next theorem is easily verified.

\begin{theorem}\label{metaplectic-identification-ii}
    Let $\mathcal{A}\in Sp(2d,\mathbb{R})$, such that $A,\Tilde{B}$ corresponding to $\mathcal{A}$ in the decomposition in \cref{metadecomp} are simultaneously diagonalisable. Then for any compact $K\subset\mathbb{R}^{2d}$, there exists some lattice $\Lambda = A\mathbb{Z}^{2d}$ such that $\mathcal{P}_{\mathcal{A},K}$ is identifiable on $\Lambda$ with respect to the Gaussian $S_0=\varphi_0\otimes\varphi_0$.
\end{theorem}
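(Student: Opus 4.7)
The plan is to follow the strategy of \cref{metaplectic-identification-i}, but with the fixed Gaussian window $S_0$ in place of the $\mathcal{A}$-adapted window used there, so that $\sigma^{\mathcal{A}}_{S_0}$ will be a genuinely \emph{complex} Gaussian; the simultaneous diagonalisability of $A,\tilde B$ is what allows the resulting density question to be reduced to a product of one-dimensional problems handled by the Müntz--Szasz and Duffin--Schaeffer machinery established earlier in this section. By unitarity of $\mu(\mathcal{A})$ and the intertwining property,
\begin{align*}
    Q_{S_0}T(\lambda,\lambda) = \langle \sigma_T, T_\lambda \sigma_{S_0}\rangle = \langle \sigma^{\mathcal{A}}_T, \rho(\mathcal{A}(\lambda,0))\sigma^{\mathcal{A}}_{S_0}\rangle,
\end{align*}
and since $\sigma^{\mathcal{A}}_T$ is supported in $K$, identifiability reduces to completeness of $\{\rho(\mathcal{A}(\lambda,0))\sigma^{\mathcal{A}}_{S_0}\}_{\lambda\in\Lambda}$ in $L^2(K)$ for a suitably chosen lattice $\Lambda$.

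Next I would compute $\sigma^{\mathcal{A}}_{S_0}$ by pushing the Gaussian $\sigma_{S_0}$ through the decomposition $\mathcal{A} = D_L V_{\tilde A} V_{\tilde B}^T V_A$ factor by factor. Since $\mu(V_A), \mu(V_{\tilde B}^T), \mu(V_{\tilde A})$ act by chirp multiplication (in the time or Fourier domain) and $\mu(D_L)$ by linear scaling, each step sends a complex Gaussian to another, yielding
\begin{align*}
    \sigma^{\mathcal{A}}_{S_0}(t) = c\, e^{-\pi t N t},\qquad N = L^T\bigl(M^{-1} - i\tilde A\bigr) L,\qquad M = (I-iA)^{-1} - i\tilde B.
\end{align*}
Reading off the blocks of $\mathcal{A}$ directly from the decomposition gives $A_1 = L^{-1}(I + \tilde B A)$ and $A_3 = L^T\bigl(\tilde A(I + \tilde B A) + A\bigr)$. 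Expanding $\rho(\mathcal{A}(\lambda,0))\sigma^{\mathcal{A}}_{S_0}$ as a modulated translate of this Gaussian collapses the identifiability condition to the density of the family $\{e^{2\pi t B\lambda}\}_{\lambda\in\Lambda}$ in $L^2(K)$ (after factoring out the nowhere-vanishing Gaussian envelope), where a short algebraic manipulation reveals a striking cancellation of $\tilde A$:
\begin{align*}
    B = \bar N A_1 - iA_3 = L^T\bigl[\bar M^{-1}(I + \tilde B A) - iA\bigr].
\end{align*}

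By hypothesis there is an orthogonal $U$ with $A = UD_AU^T$ and $\tilde B = UD_{\tilde B}U^T$ both diagonal, so $B = L^T U B' U^T$ with $B' = \mathrm{diag}(b'_j)$ an explicit rational function of the eigenvalues $a_j, b_j$. An elementary computation shows that $b'_j \neq 0$ for every $j$: its real part is $(1+a_jb_j)/((1-a_jb_j)^2+b_j^2)$, which vanishes only when $a_jb_j=-1$, and in that case the imaginary part reduces to $-a_j\neq 0$. Setting $\Lambda = UH\mathbb{Z}^{2d}$ for a diagonal matrix $H$ to be chosen, and changing variables $s = U^T L t$ on $K$, one reduces the density question to the coordinate-wise statement that $\{e^{2\pi s_j b'_j h_j n_j}\}_{n_j\in\mathbb{Z}}$ is dense in $L^2$ of a compact interval. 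The coordinates with $\mathrm{Re}(b'_j)\neq 0$ are handled by \cref{muntzszasz1d} for any $h_j>0$, while those with $\mathrm{Re}(b'_j)=0$ require a sufficiently small $h_j$ and the Duffin--Schaeffer theorem, exactly as in the proof of \cref{diagonalcomplexdensity-singular}; a product argument then yields completeness on $\Lambda$.

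The main obstacle is the bookkeeping for $\sigma^{\mathcal{A}}_{S_0}$ through the four factors of the decomposition together with verifying the cancellation of $\tilde A$ in $B$. Once this is done, the role of the simultaneous diagonalisability of $A$ and $\tilde B$ is exactly to diagonalise $B$ modulo the action of $L$, which is absorbed into the change of variables, and the remaining one-dimensional density statements follow.
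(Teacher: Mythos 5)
Your proposal is correct and follows essentially the same route as the paper: reduce identifiability to completeness in $L^2(K)$ of the metaplectically transformed Gaussian system using the decomposition of \cref{metadecomp}, factor out the nowhere-vanishing Gaussian envelope, and conclude by diagonalising with the common orthogonal matrix $U$ and applying the one-dimensional M\"untz--Szasz and Duffin--Schaeffer density results coordinate-wise. The only differences are organizational: you compute $\sigma^{\mathcal{A}}_{S_0}$ directly as a complex Gaussian (which treats singular $\Tilde{B}$ uniformly and leads you to verify nonvanishing of the diagonal exponents $b'_j$ explicitly, including the boundary case $a_jb_j=-1$), whereas the paper moves $\mu(D_L V_{\Tilde{A}})^*$ onto the symbol side, evaluates the chirp convolution by completing the square, and handles the kernel directions of $\Tilde{B}$ separately via \cref{multivariable-zalik}.
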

\begin{proof}
Since the metaplectic operators are unitary, 
\begin{align*}
    \langle T, \alpha_x (S)\rangle &= \langle \sigma_T, T_x \sigma_S \rangle \\
    &= \langle \mu(\mathcal{A})\sigma_T, \mu(\mathcal{A})T_x \sigma_S \rangle.
\end{align*}
We decompose $\mathcal{A}$ as in \cref{metadecomp}:
\begin{align*}
    \mathcal{A} = D_L V_{\Tilde{A}} V_{\Tilde{B}}^T V_A.
\end{align*}
Inserting this into the inner product gives
\begin{align*}
    \langle T, \alpha_x (S) \rangle = \langle \mu(D_L V_{\Tilde{A}})^* \mu(\mathcal{A})\sigma_T, \mu(V_{\Tilde{B}}^T V_A)T_x \sigma_S \rangle.
\end{align*}
Since $\mu(D_L V_{\Tilde{A}})^*$ maps compactly supported functions on $K$ unitarily to compactly supported functions on $L^{-1}\cdot K$, if $\mu(V_{\Tilde{B}}^T V_A)T_x \sigma_S$ is dense in $L^2(L^{-1}\cdot K)$ then we have operator identification. Since $K$ is an arbitrary compact subset, we may consider $K$ instead of $L^{-1}\cdot K$ here for simplicity of notation.

We consider the window $S$ is given by 
\begin{align*}
    \sigma_S(t) = e^{-\pi t\cdot t}
\end{align*}
for ease of notation. The product $e^{i\pi t\cdot At}\cdot e^{-\pi(t-x)\cdot (t-x)}$ is then
\begin{align*}
    e^{it\cdot At}\cdot e^{(t-x)\cdot (t-x)} = e^{-\pi\left(t\cdot (I-iA)t - 2t\cdot x + x\cdot x\right)}.
\end{align*}
If $\Tilde{B}$ is singular, then the action of $\mu(V_{\Tilde{B}}^T)$ is given by convolution over the space of variables $\Tilde{B}\mathbb{R}^{2d}$, while the variables in the space ker$(\Tilde{B})$ are unchanged. We denote $B=\Tilde{B}^{-1}$, considered to be acting on $\mathbb{R}^{2d} / \mathrm{ker}(\Tilde{B})$. Then for  $f\in L^2(\mathbb{R}^{2d}) = L^2(\mathrm{ker}(\Tilde{B})) \otimes L^2(\Tilde{B}\mathbb{R}^{2d})$;
\begin{align*}
    \mu(V_{\Tilde{B}}^T) f = |B|\cdot (\delta \otimes e^{i\pi Bt\cdot t}) * f.
\end{align*}

We can assume without loss of generality that $K$ is a product of the form $X\times Y$ where $X\subset B\mathbb{R}^{2d}$ and $Y \subset \mathrm{ker}(B)$, by simply enlarging $K$ if necessary. We then decompose $L^2(K)$ as $L^2(X) \otimes L^2(Y)$.
The span of the functions 
\begin{align*}
    e^{-\pi\left(t\cdot (I-iA)t - 2t\cdot x + x\cdot x\right)}
\end{align*}
is dense in $L^2(Y)$ for $x$ in any $\Lambda\subset \mathrm{ker}(B)$. This is because multiplying a function in $L^2(Y)$ by $e^{-\pi t\cdot(I-iA)t}$ is an injective map on $L^2(Y)$, and $e^{-\pi x\cdot x}$ is a non-zero constant, and then the span of $e^{2\pi t\cdot x}$ is dense in $L^2(Y)$ by \cref{multivariable-zalik}. 

We then consider the case that $B$ is non-singular, and see that
\begin{align*}
    e^{i\pi t\cdot Bt} * (e^{i\pi t\cdot At}\cdot e^{-\pi(t-x)^2}) &= e^{- \pi x\cdot x} \int_{\mathbb{R}^{2d}} e^{i\pi t'\cdot Bt'}\cdot e^{-\pi\left((t-t')\cdot (I-iA)(t-t') - 2(t-t')\cdot x \right)}\, dt' \\
    &= e^{- \pi x\cdot x} \int_{\mathbb{R}^{2d}} e^{-\pi\left(t'\cdot(I-iA-iB) t' - 2t\cdot(I-iA) t'\right)}\\
    &\qquad\times  e^{-\pi\left( t\cdot (I-iA)t + 2t'\cdot x - 2t\cdot x\right)}\, dt' \\
    &= e^{- \pi\left(t\cdot (I-iA)t -2t\cdot x + x\cdot x\right)} \\
    &\qquad\times \int_{\mathbb{R}^{2d}} e^{-\pi\left(t'\cdot(I-iA-iB) t' - 2t\cdot(I-iA) t' + 2t'\cdot x \right)}\, dt'.
\end{align*}
Denoting $D=(1-iA-iB)$, which is symmetric and invertible, and completing the square gives
\begin{align*}
    \int_{\mathbb{R}^{2d}} e^{-\pi\left(t'\cdot D t' - 2t\cdot(I-iA) t' + 2t'\cdot x \right)}\, dt' &= e^{\pi [(I-iA)t+x]\cdot D^{-1}[(I-iA)t+x]} \\
    &\quad\times \int_{\mathbb{R}^{2d}} e^{-\pi\left((t'-D^{-1}[(I-iA)t+x])\cdot D (t'-D^{-1}[(I-iA)t+x])  \right)}\, dt'.
\end{align*}
Since $D$ is symmetric, the multivariate complex Gaussian integral can be computed to be: 
\begin{align*}
     \int_{\mathbb{R}^{2d}} e^{-\pi\left(t'\cdot D t' - 2t\cdot(I-iA) t' + 2t'\cdot x \right)}\, dt'= \frac{1}{|D|^{1/2}}.
\end{align*}
Combining the two calculations then gives
\begin{align}\label{density-form}
    e^{i\pi t\cdot Bt} * (e^{i\pi t\cdot At}\cdot e^{-\pi(t-x)^2}) &= \frac{1}{|D|^{1/2}} e^{-\pi\left(t\cdot (I-iA)t -2t\cdot x + x\cdot x + [(I-iA)t+x]\cdot D^{-1}[(I-iA)t+x] \right)} \nonumber \\
    &= \frac{1}{|D|^{1/2}} e^{-\pi t\cdot (I-iA)[I + D^{-1}(I-iA)]t}\cdot \nonumber\\
    &\qquad \times e^{2\pi t\cdot[I - (I-iA)D^{-1}]x}e^{-\pi x\cdot(I-D^{-1})x}.
\end{align}

Using the same argument as above, the first term of \cref{density-form} is a non-zero bounded function, independent of $x$, for any compact subset $K$, so the composition with any compactly supported function $f$ is again a compactly supported function, as the composition mapping is injective. The third term is a non-zero constant for each $x$. Hence, density of the span of the set 
\begin{align*}
    \{e^{i\pi t\cdot Bt} * (e^{i\pi t\cdot At}\cdot e^{-\pi(t-x)^2})\}_{x\in\Lambda}
\end{align*}
in the space of $L^2(K)$ is equivalent to the span of the density of the middle term of \cref{density-form}.
Note that
\begin{align*}
    (I-iA)D^{-1} &= (I-iA - iB + iB)(I-i(A+B))^{-1} \\
    &= I + iB(I-i(A+B))^{-1} \\
    &= I + iBD^{-1},
\end{align*}
so the middle term of \cref{density-form} reduces to $e^{-2\pi t\cdot iBD^{-1}x}$. Since we assume $B$ is invertible, $BD^{-1}$ is invertible. By assumption, $A$ and $B$ are simultaneously diagonalisable, that is to say $A=UD_A U^T$ and $B=UD_BU^T$ for the same orthogonal matrix $U$. Then since $I=UIU^T$ for any orthogonal $U$, $D$ and $D^{-1}$ are also diagonalisable with respect to $U$. Hence, the result follows from \cref{diagonalcomplexdensity-singular}.

\end{proof}

As a most basic consequence of \cref{metaplectic-identification-ii}, the case $\mathcal{A}=I$ satisfies the condition, so we recover the same result as for \cref{metaplectic-identification-i}:
\begin{corollary}
    The space $\mathcal{P}_{I,K}$, of those operators with compactly supported Weyl symbols, is identifiable for any lattice $\Lambda \subset \mathbb{R}^{2d}$.
\end{corollary}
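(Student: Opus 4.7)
First I would verify that $\mathcal{A} = I$ meets the hypothesis of Theorem \ref{metaplectic-identification-i}: the identity is trivially upper block triangular with diagonal blocks equal to $I$ and vanishing off-diagonal part. Since $\mu(I)$ is the identity on tempered distributions, the $I$-Weyl symbol $\sigma_T^I$ coincides with the ordinary Weyl symbol $\sigma_T$, so $\mathcal{P}_{I,K}$ is exactly the space of Hilbert--Schmidt operators whose Weyl symbol is supported in the compact set $K$. Choosing the symmetric invertible parameter $A = I$ in the statement of Theorem \ref{metaplectic-identification-i} collapses the generalised Gaussian to $\varphi_I = \varphi_0$ and hence the window to $\mu(I)^*(\varphi_0 \otimes \varphi_0) = \varphi_0 \otimes \varphi_0$. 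Theorem \ref{metaplectic-identification-i} then immediately delivers identifiability of $\mathcal{P}_{I,K}$ on every lattice $\Lambda \subset \mathbb{R}^{2d}$ with respect to this Gaussian window.

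For completeness one can also check the hypothesis of Theorem \ref{metaplectic-identification-ii} by taking the trivial decomposition $I = D_I V_0 V_0^T V_0$, in which the matrices $A$ and $\tilde{B}$ both vanish and are vacuously simultaneously diagonalisable. However, the statement of Theorem \ref{metaplectic-identification-ii} only guarantees identifiability on \emph{some} lattice rather than on any lattice, so to obtain the stronger conclusion claimed in the corollary one must either trace through the proof of Theorem \ref{metaplectic-identification-ii} in this degenerate case (where the convolution factor $e^{i\pi t \cdot Bt}$ is trivial and the density step reduces directly to the multivariate Zalik Theorem \ref{multivariable-zalik} with $A = I$, valid on every lattice), or more efficiently appeal to Theorem \ref{metaplectic-identification-i} as above.

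There is no substantive obstacle: the computational work has already been absorbed into Theorems \ref{metaplectic-identification-i} and \ref{metaplectic-identification-ii}. The only genuine point of care is to select the correct antecedent theorem so as to obtain the \emph{any lattice} conclusion, rather than only the weaker existence of some identifying lattice provided by Theorem \ref{metaplectic-identification-ii}.
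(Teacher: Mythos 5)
Your proof is correct and follows essentially the same route as the paper: the corollary is exactly \cref{metaplectic-identification-i} with $\mathcal{A}=I$ (and the choice $A=I$, giving the window $\varphi_0\otimes\varphi_0$), which is what the paper itself relies on when it notes that the case $\mathcal{A}=I$ of \cref{metaplectic-identification-ii} recovers the same result as \cref{metaplectic-identification-i}. Your further observation that \cref{metaplectic-identification-ii} as stated only yields identifiability on \emph{some} lattice, so that the ``any lattice'' conclusion must come from \cref{metaplectic-identification-i} (or from tracing the degenerate case, where the density step reduces to \cref{multivariable-zalik}), is a correct and worthwhile point of care.
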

Similarly, in the case $\mathcal{A} = J$, we recover the well known fact of operator identification for band-limited symbols, discussed in previous sections.

We can also consider the problem of phase retrieval for compactly supported functions, as discussed in \cite{GrLi23} \cite{GrLiSh22}:
\begin{example}\normalfont
    Consider the case of phase retrieval on a lattice for a compactly support $f \in L^2(K)$. This is equivalent to the identification of the operator $f\otimes f$ with respect to a rank--one window $g\otimes g$. The rank--one operator $f\otimes f$ being compactly supported corresponds to $\mathcal{F}_{\mathcal{U}}(f\otimes f)$ being compactly supported, where $\mathcal{U}$ corresponds to the metaplectic operator mapping the Weyl symbol to kernel;
    \begin{align*}
        \mathcal{U} = D_L \cdot \mathcal{A}_{FT_2},
    \end{align*}
    where 
    \begin{align*}
        L=\begin{pmatrix}
        I/2 && I/2 \\
        I && -I
    \end{pmatrix}
    \end{align*}
    and
    \begin{align*}
        \mathcal{A}_{FT_2} = \begin{pmatrix}
        I && 0 && 0 && 0 \\
        0 && 0 && 0 && -I\\
        0 && 0 && I && 0 \\
        0 && I && 0 && 0
    \end{pmatrix}.
    \end{align*}
    $\mathcal{A}_{FT_2}$ can be decomposed in the form of \cref{metadecomp} as
    \begin{align*}
        \mathcal{A}_{FT_2} = 
        \begin{pmatrix}
            C && 0  \\
            0 && C^{-1} 
        \end{pmatrix}
        \begin{pmatrix}
            I && 0  \\
            A && I 
        \end{pmatrix}
        \begin{pmatrix}
            I && B  \\
            0 && I
        \end{pmatrix}
        \begin{pmatrix}
            I && 0  \\
            A && I
        \end{pmatrix},
    \end{align*}
    with $A=\begin{pmatrix}
            0 && 0  \\
            0 && -I
        \end{pmatrix}$,
    $B=\begin{pmatrix}
            0 && 0  \\
            0 && I
        \end{pmatrix}$
    and $C=\begin{pmatrix}
            I && 0  \\
            0 && -I
        \end{pmatrix}$.
    From \cref{metaplectic-identification-ii}, it follows that the space $\mathcal{P}_{\mathcal{A},K}$ is identifiable on the lattice $\Lambda_1 \times \Lambda_2$, where $\Lambda_1$ can be taken to be any lattice in $\mathbb{R}^d$, while $\Lambda_2$ is such that $Y \subset \Lambda^\perp$ for $C\cdot K \subset X\times Y$, $X,Y\subset \mathbb{R}^d$.

    In particular since the window $e^{-\pi t^2}$ is rank--one, this gives phase retrieval for the space $L^2(K)$ on the lattice $\Lambda_1 \times \Lambda_2$. Furthermore, consider the case where $S=f\otimes f$, and $\mu(A)f$ is compactly supported. The metaplectic operator $\mu(A)$, with $A\in Sp(d,\mathbb{R})$, acting on $f$, corresponds to the metaplectic operator $\mu(\mathcal{A})$ acting on $K_{f\otimes f}$, where 
    \begin{align*}
        \mathcal{A} = \begin{pmatrix}
            A && 0 \\
            0 && A^{-T}
        \end{pmatrix}.
    \end{align*}
    Thus, it can easily be seen that the ability to identify, and consequently perform phase retrieval, applies to any of the sets of operators $S= f\otimes f$ where $\mu(A)f$ is compactly supported for any $A\in Sp(d,\mathbb{R})$.
\end{example}

\begin{remark}\normalfont
    The condition of simultaneously diagonalisable $A,\Tilde{B}$ in \cref{metaplectic-identification-ii} is required in order to make the real change of variables in \cref{diagonalcomplexdensity-singular}. The complex analytical proof of the Müntz-Szasz theorem relies on sets of uniqueness for complex analytic functions with a certain growth condition (cf. \cite{Lu71}). A general change of variables in \cref{diagonalcomplexdensity-singular} would not necessarily preserve this growth condition in the corresponding analytic function. It is possible that a similar result for sets of uniqueness for complex analytic functions of general exponential type would suffice to relax the diagonalisation condition in \cref{metaplectic-identification-ii}.
\end{remark}

\section*{Acknowledgements}
The author would like to thank Franz Luef for the helpful discussions.

\bibliography{refs}
\bibliographystyle{abbrv}

\end{document}